 \newtheorem{theorem}{Theorem}
 \newtheorem{lemma}{Lemma}
 \newtheorem{clm}{Claim}
 \newtheorem{prop}{Proposition}
 \newtheorem{cor}{Corollary}
 \newcommand{\tpi}{2\pi i}
 \newcommand{\hfsq}[1]{\displaystyle\frac{1}{2\sqrt{#1}}}
 \newcommand{\fsq}[1]{\displaystyle\frac{1}{\sqrt{#1}}}
 \newcommand{\meta}[1]{\mathrm{SO}(#1)_2}
 \newcommand{\roi}{\mathcal{O}}
 \newcommand{\sgen}{\sigma}
 \newcommand{\tgen}{\tau}
 \newcommand{\mcg}[1]{\textrm{MCG}(\Sigma_{#1})}
 \newcommand{\modgp}{\textrm{SL}(2, \mathbb{Z})}
 \newcommand{\gl}[1]{\textrm{GL}(#1)}
 \newcommand{\pgl}[1]{\textrm{PGL}(#1)}
 \newcommand{\rep}{\rho_1}
 \newcommand{\tw}[1]{\theta_{#1}}
 \newcommand{\tx}{\psi}
 \newcommand{\vrow}[1]{1 & \tw #1 & \tw #1^2 & \cdots & \tw #1^r\\}
 \newcommand{\cob}[2]{#1^{-1} #2 #1}
 \newcommand{\vect}[1]{[1, {\tw 1}^{#1}, \tw 2 ^{#1},\cdots, \tw r ^{#1}]^t}
 \newcommand{\kzero}{H}
 \newcommand{\obj}[1]{\textbf{#1}}
   \newcommand{\sorig}{\left[\begin{array}{ccccc}
 		\displaystyle{\hfsq p}&\displaystyle{\hfsq p}& \displaystyle{\fsq p\cdot a^t} &\displaystyle{\frac{1}{2}} &\displaystyle{\frac{1}{2}} \\
 		\\
 		\displaystyle{\hfsq p}&\displaystyle{\hfsq p}& \displaystyle{\fsq p\cdot a^t} &\displaystyle{-\frac{1}{2}}&\displaystyle{-\frac{1}{2}}\\
 		\\
 		\displaystyle{\fsq p \cdot a}&\displaystyle{\fsq p\cdot a}&\displaystyle A&\displaystyle{0_{r\times 1}} &\displaystyle{0_{r\times 1}} \\
 		\\
 		\displaystyle \frac{1}{2} &\displaystyle{ -\frac{1}{2}}& \displaystyle{0_{1\times r}}&\displaystyle{\frac{1}{2}}&\displaystyle{-\frac{1}{2}} \\
 		\\
 		\displaystyle{\frac{1}{2}}&\displaystyle{-\frac{1}{2}}& \displaystyle{0_{1\times r}}&\displaystyle{-\frac{1}{2}}&\displaystyle{\frac{1}{2}} \\
 	\end{array}
 	\right]}
 \newcommand{\torig}{\begin{bmatrix}
     1&&&&&&\\
     &1&&&&&\\
     &&\tw 1&&&&\\
     &&&\ddots&&&\\
     &&&&\tw r&\\
     &&&&&\tx&\\
     &&&&&&-\tx\\
   \end{bmatrix}}
 \newcommand{\sump}{\sum_{k = 1}^r}
 \newcommand{\srep}{\rep(\sgen)}
 \newcommand{\trep}{\rep(\tgen)}
 \newcommand{\ubasis}{\mathcal{B}_U}
 \newcommand{\sblock}{\left[
	\begin{array}{ccc|ccc}
		0 & 1 & 0 &   &&                 \\
		1 & 0 & 0 &   &0_{3\times (r+1)} \\
		0 & 0 & 1 &   &&               \\
		\hline
		&&&&&\\
		& && \fsq p  &\fsq p \cdot a^t\\
		&0_{(r+1)\times 3} &&&&\\
		& &&\tfsq p \cdot a &   A    \\
	\end{array}\right]}
 \newcommand{\tfsq}[1]{\displaystyle\frac{2}{\sqrt{#1}}}
 \newcommand{\tblock}{\left[
     \begin{array}{ccc|cccc}
       1&0   &0   &&&&\\
       0&0   &\tx^2 &&&&\\
       0&1 &0   &&&&\\
       \hline
        &&        &1&&&\\
        &&        &&\tw 1&&\\
        &&        &&&\ddots&\\
        &&        &&&&\tw r
     \end{array}
   \right]}
 \newcommand{\spanc}{\textrm{span}_\mathbb{C}}
 \newcommand{\sprime}
{
	\begin{bmatrix}
		\fsq p &\fsq p\cdot a^t\\
		\tfsq p\cdot a &A \\
	\end{bmatrix}
}
 \newcommand{\tprime}
 {
   \begin{bmatrix}
     1&&&\\
     &\tw 1&&\\
     &&\ddots&\\
     &&&\tw r    
   \end{bmatrix}
 }
 \newcommand{\dmat}
 {
   \begin{bmatrix}
     \displaystyle\frac{1}{2} &  &       & \\
     & 1&       & \\
     &  & \ddots& \\
     &  &       &1
   \end{bmatrix}
 }
 \newcommand{\vdm}
 {
   \begin{bmatrix}
     1 & 1 & 1 & \cdots & 1\\
     \vrow 1
     \vrow 2
     \vdots & \vdots &\vdots &&\vdots\\
     \vrow r
   \end{bmatrix}
 }
 \newcommand{\smat}
{
	\begin{bmatrix}
		\fsq p &\tfsq p\cdot a^t\\
		\fsq p \cdot a &A \\
	\end{bmatrix}
}
 \newcommand{\iotafunc}
 {
   \begin{cases}
     1, & \mbox{ if }\ p\equiv 1\mod 4,\\
     i, & \mbox{ if }\ p\equiv 3\mod 4.
   \end{cases}
 }
 \newcommand{\sotp}{\sum_{l = 1}^r}
 \newcommand{\szttp}{\sum_{l = 0}^{2r}}
 \newcommand{\zp}[1]{\zeta^{#1}}
 \newcommand{\ffield}{\mathbb{Z}/p\mathbb{Z}}
 \newcommand{\jac}[2]{\displaystyle\Big(\frac{#1}{#2}\Big)_J}
 \newcommand{\vinv}[1]{(V^{-1})_{#1}}
 \newcommand{\dprod}[2]{\displaystyle\prod_{#1 = 1}^{#2}}
    \newcommand{\lie}[1]{\mathfrak{#1}}
    \newcommand{\irr}{\mathrm{Irr}(\meta{p})}
    \newcommand{\oz}{\obj{Z}}
    \newcommand{\ox}{\obj{X}}
    \newcommand{\oy}{\obj{Y}}
    \newcommand{\oo}{\obj{1}}
  \newcommand{\heis}[1]{\mathcal{H}_{#1}}
  \newcommand{\fmodgp}{\mathrm{SL}(2, \ffield)}
  \newcommand{\helt}[3]{
    \begin{bmatrix}
      1 & #2 & #3\\
      0 & 1  & #1\\
      0 & 0  & 1\\
    \end{bmatrix}
  }
  \newcommand{\slelt}[4]{
    \begin{bmatrix}
      #1 & #2\\
      #3 & #4
    \end{bmatrix}  
    }
  \newcommand{\tvc}[2]{
    \begin{bmatrix}
      #1\\
      #2
    \end{bmatrix}
    }  
  \newcommand{\cchar}{\varphi}
  \newcommand{\hrep}{\pi}
  \newcommand{\funcsp}[1]{\mathcal{L}^2(#1)}  
  \newcommand{\wrep}[1]{W_{#1}} 
  \newcommand{\evensp}{E^{even}}
  \newcommand{\oddsp}{E^{odd}}
\title{On modular group representations associated to $\mathrm{SO}(p)_2$-TQFTs}
\author{Yilong Wang}
\date{}
\begin{document}

\address{Department of Mathematics, The Ohio State University, Columbus, OH 43210, USA}
\email{wang.3003@osu.edu}

\begin{abstract}
In this paper, we prove that for any odd prime larger than 3, the modular group  representation associated to the $\mathrm{SO}(p)_2$-TQFT can be defined over the ring of integers of a cyclotomic field. We will provide explicit integral bases. In the last section, we will relate these representations to the Weil representations over finite fields.
\end{abstract}
\maketitle

\section{Introduction}\label{sec1}
  Let $p$ be an odd prime. According to \cite{MR1091619}, to each modular tensor category, we can associate a Reshetikhin-Turaev TQFT. This TQFT not only gives rise to quantum invariants of 3-manifolds, but also to a series of projective representations of the mapping class groups $\mcg{g}$ of closed oriented surfaces $\Sigma_g$ of genus $g$. In particular, in genus one, we get a projective representation of the modular group $\mcg{1} = \modgp$.
  %revised Feb 11, 2017
  
  A systematic way to construct modular categories is to consider the representation theory of quantum groups at roots of unity. The TQFT representations of mapping class groups arising from such modular categories are finite-dimensional and can be defined over a cyclotomic field $\mathbb{Q}(\zeta)$ where $\zeta$ is a root of unity. As a result, the corresponding quantum invariants are elements of $\mathbb{Q}(\zeta)$.
  %try to find a reference for the last sentence. Feb 11, 2017 
  
  The first integrality result was obtained by Murakami \cite{MR1277059,MR1307078}, who showed that the SU(2)- and SO(3)-invariants are algebraic integers when the order of $\zeta$ is prime. More precisely, those invariants are elements of the ring of integers of $\mathbb{Q}(\zeta)$, namely, $\mathbb{Z}[\zeta]$. The result was reproved in \cite{MR1434653}, generalized to all classical Lie types in \cite{MR1662260,MR1697388}, then to all Lie types by Le \cite{MR1953323}. These results helped us relate the quantum invariants to other invariants such as the Casson invariant \cite{MR1277059,MR1307078} and the Ohtsuki series \cite{MR1297897,MR1374199,MR1953323}.
  %Feb 11, 2017

  A natural question to ask then is whether one can define the whole TQFT over $\mathbb{Z}[\zeta]$, or at least can one define the representations of the mapping class groups over $\mathbb{Z}[\zeta]$. If that is the case, we can get more information about these representations. For example, in \cite{MR2382862} studied the Frohman Kania-Bartoszynska ideal \cite{MR1857120} using the integral SO(3)-TQFT, and this method is supposed to be generalized to other integral TQFTs. Another possible  application of integral TQFTs is that we can reduce these TQFTs by the natural reduction map $\mathbb{Z}[\zeta]\to\ffield$ to get the so-called $p$-modular TQFTs.  $p$-modular TQFTs have rich connections to topological information of 3-manifolds, such as the Casson-Lescop invariant, the Milnor torsion, see for example, \cite{MR2002607}.  We may also answer questions such as the finiteness of the images of these representations by the integrality results.
  %Feb 11, 2017
  
  For the SO(3)-TQFT, Gilmer, Masbaum and van Wamelen first constructed integral bases for genus one and two \cite{MR2059432}. Then Gilmer and Masbaum generalized the result to arbitrary genus in \cite{MR2382862}, hence completed the construction of the integral SO(3)-TQFT.
  %Feb 11, 2017 
  
  In this paper, we will focus on the integrality properties of the $\meta{p}$-TQFTs for $p \geq 5$, which comes from the representation theories of quantum groups associated to the Lie algebra $\lie{so}(p)$ at certain roots of unity. These TQFTs emerge as important objects in the context of topological quantum computing \cite{MR3215577}, and as interesting examples of classical Lie type quantum groups themselves. We will establish the integrality of them in genus one by proving
  %Feb 12, 2017

  \begin{theorem}\label{thm1}
  	Suppose $p \geq 5$ is an odd prime. Then the genus one mapping class group  representation given by the $\meta{p}$-TQFT can be defined over $\roi$, where
  	\begin{equation}
  	\roi =
  	\begin{cases} 
  	\mathbb{Z}[\zeta_p, i] = \mathbb{Z}[\zeta_{4p}], & \mbox{if } p \equiv 3\  (mod\ 4) \\ 
  	\mathbb{Z}[\zeta_p], & \mbox{if } p \equiv 1\ (mod\ 4).
  	\end{cases}
  	\end{equation} 
  \end{theorem}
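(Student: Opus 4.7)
The plan is to produce an explicit basis for the representation space in which both $\srep$ and $\trep$ have entries in $\roi$. Starting from the basis of simple objects of $\meta{p}$, in which the matrices take the forms $\sorig$ and $\torig$, the denominators $1/\sqrt{p}$ and $1/2$ obstruct integrality. I will remove them in two successive basis changes.

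The first step exploits the symmetry of $\sorig$ between the first two and last two basis vectors: rows $1$ and $2$ agree in the first three columns and differ only by sign in the last two, while rows $r+3$ and $r+4$ show a similar pattern. Replacing these pairs of basis vectors by their sums and differences produces the block decomposition shown by $\sblock$ and $\tblock$: a $3 \times 3$ block with entries in $\{0, 1, \psi^2\}$, which lies in $\roi$ since $\psi$ is a root of unity, together with an $(r+1)\times(r+1)$ block $\sprime, \tprime$ in which all remaining $1/\sqrt{p}$ denominators are concentrated.

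The second and more substantial step is to conjugate the $(r+1)\times(r+1)$ block by a Vandermonde-type matrix $V$ built from powers of the twists as in $\vdm$, together with the diagonal normalization $\dmat$. Since each twist is a power of $\zeta_p$, every entry of $V$ already lies in $\cycring{\zeta_p}\subseteq\roi$. The essential input that clears the remaining $1/\sqrt{p}$ factors is the classical quadratic Gauss sum
\begin{equation}
\sum_{k=1}^{p-1}\jac{k}{p}\zp k = \varepsilon\sqrt{p},
\end{equation}
where $\varepsilon=1$ if $p\equiv 1\pmod 4$ and $\varepsilon=i$ if $p\equiv 3\pmod 4$. Combining this identity with the $1/\sqrt{p}$ factors of $\sprime$ produces entries in $\cycring{\zeta_p}$ in the first case and in $\cycring{\zeta_p,i}=\cycring{\zeta_{4p}}$ in the second, matching precisely the case distinction in the definition of $\roi$.

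The most technical step will be the entrywise verification that $V\sprime V^{-1}$ and $V\tprime V^{-1}$ indeed land in $\roi$. The $T$-part is more delicate than the $S$-part: while $\tprime$ is diagonal, its conjugate by the Vandermonde becomes a full matrix whose entries are Jacobi-weighted sums of products of twists, and these must be simplified using Jacobi symbol manipulations (and possibly quadratic reciprocity identities) until one recognises a clean element of $\roi$. This is where I expect the bulk of the work to lie.
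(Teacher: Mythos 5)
Your first basis change and the decision to conjugate the $(r+1)$-dimensional block by the Vandermonde matrix $V$ of twist powers match the paper, and the quadratic Gauss sum is indeed the engine for the $S$-part. But there are two genuine gaps, and you have the location of the difficulty backwards. First, the $T$-part is not delicate at all and involves no Jacobi symbols: $\cob{V}{T}$ is exactly the transpose of the companion matrix of $h(x)=\prod_{k=0}^{r}(x-\tw{k})$, whose entries are (up to sign) elementary symmetric functions of the twists and hence manifestly lie in $\mathbb{Z}[\zeta_p]$. This observation is not a convenience but the key to the whole proof: the Gauss sum computation shows that each column of $SV$ with index $k\geq 1$ is, up to a unit in $\roi$, a power vector $\vect{j}$, i.e.\ the first column of $T^jV$; multiplying by $V^{-1}$ on the left then gives the first column of $(\cob{V}{T})^j$, which is integral. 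Without this (or an equivalent device) your proposed ``entrywise verification'' runs straight into the entries of $V^{-1}$, whose denominators are Vandermonde products $\prod_{n\neq k}(\tw{k}-\tw{n})$, and the Gauss sum does nothing to cancel those.

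Second, you have not addressed the $0$-th column of $\cob{V}{S}$, which is the one place where a number-theoretic input beyond the Gauss sum is required. The Gauss sum gives $(SV)_{00}=\sqrt{p}$ and $(SV)_{j0}=0$ for $j\geq 1$, so this column of $\cob{V}{S}$ equals $\sqrt{p}$ times the $0$-th column of $V^{-1}$, and one must show $\sqrt{p}\cdot\vinv{j0}\in\roi$. The paper does this by combining Lagrange interpolation (which shows that $\vinv{j0}\cdot\prod_{n=1}^{r}(1-\tw{n})$ is a coefficient of the integral polynomial $\prod_{n=1}^{r}(x-\tw{n})$) with the factorization $\sqrt{p}=u\cdot\prod_{k=1}^{r}(1-\tw{k})$ for a unit $u\in\roi^\times$, which in turn rests on $p=\prod_{l=1}^{2r}(1-\zeta^l)$ and the fact that $(1-\zeta^\alpha)/(1-\zeta^\beta)$ is a unit. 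Your proposal contains none of this; the case distinction in $\roi$ that you correctly attribute to the sign of the Gauss sum is the easy part, while the hard part is absorbing the resulting bare $\sqrt{p}$ into $V^{-1}$.
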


  In the proof, we give an explicit integral basis as the authors of \cite{MR2059432} did for the SO(3)-TQFT. As a byproduct, we show that a part of the genus one mapping class group representation of $\modgp$ factors through a part of the Weil representation of $\fmodgp$, then conclude  as a corollary that the image of the $\meta{p}$-TQFT representation in genus one is finite. This confirms the theorem by \cite{MR2725181} saying that the $\modgp$ TQFT representation given by any modular category is finite.
  %Feb 12, 2017
  
  The paper is organized as follows. In \hyperref[sec2]{Section 2}, we give a quick review of preliminaries on the TQFT representation and the $\meta{p}$-TQFT.  In particular, we give explicit matrix presentations of the $\modgp$ representation under a fixed basis. In \hyperref[sec3]{Section 3}, we use number theoretic tools to construct a new basis for the representation space and show that it is indeed integral. In \hyperref[sec4]{Section 4}, we briefly recall the definition of the Weil representation of $\fmodgp$. We prove in \hyperref[eqweil]{Theorem 2} that a part of the genus one TQFT representation factors through a part of the Weil representation. As a result, we prove that the image of the $\modgp$ representation is finite.
  %Feb 12, 2017
  
  \textit{Notations and conventions.} In the discussion below, we will assume that $p\geq 5$ is an odd prime. Let $r = \frac{p-1}{2}$. Let $\zeta_n = e^{\frac{2\pi i}{n}}$ be an $n$-th root of unity. We will let $\roi$ be as in \hyperref[thm1]{Theorem 1}, and use $\roi^\times$ to denote the group of units of $\roi$. It is a well-known fact that $ \sqrt{p}\in\roi $, hence $ \frac{1}{\sqrt{p}}\in\mathbb{Q}(\zeta_p)$ or $\mathbb{Q}(\zeta_{4p})$ depending on $p$. Let $\ast^t$  denote the transpose of $\ast$, and let $Id_n$ stands for the $n\times n$ identity matrix. We call a representation \textbf{integral} if the matrix coefficients of the representation are in $\roi$ with respect to certain basis. Sometimes we will also call a matrix with entries in $\roi$ integral.
  %Feb 12, 2017

  \textbf{Acknowledgments.} The author would like to thank his advisor Professor Thomas Kerler for his guidance and many stimulating discussions. The author would like to thank Professor James W. Cogdell for helping the author understanding the Weil representation. The author is also grateful to Professor Patrick Gilmer, Professor Eric Rowell and Professor Zhenghan Wang for helpful discussions and their advice.

\section{Preliminaries}\label{sec2}
  In this section, we briefly recall the definition of the modular category $\meta{p}$ and the Reshetikhin-Turaev TQFT associated to it. For more details, the readers are referred to \cite{MR1091619, MR3215577}.
  %Feb 12, 2017

  The $\meta{p}$ modular category is the unitary modular tensor category obtained from the representation theory of the quantum group $U_q(\lie{so}(p))$, where $q = e^{\frac{\pi i}{2p}}$. It has $(r + 4)$ simple objects, which we will label as $ \irr = \{\obj{1}, \obj{Z}, \obj{Y}_1, ..., \obj{Y}_r, \obj{X}, \obj{X}'\}$. Here $\obj{1}$ is the tensor unit.  The fusion rules can be completely determined by the following listed ones:
  %Feb 12, 2017
  
\begin{equation}\label{fusionrules}
  \begin{array}{ll}
  \obj{Z}\otimes \obj{Z}&\cong\ \ \obj{1},\\
  \\
  \obj{Z}\otimes \obj{X}&\cong\ \ \obj{X}',\\
    \\
  \obj{Z}\otimes \obj{Y}_j&\cong\ \ \obj{Y}_j, \ \ \forall j = 1,\cdots, r,\\
  \\
  \ox\otimes\ox&\cong\ \ \oo\oplus\bigoplus_{j = 1}^r \oy_j,\\
  \\
  \ox\otimes\ox'&\cong\ \ \oz\oplus\bigoplus_{j = 1}^r \oy_j,\\
  \\
  \ox\otimes\oy_j&\cong\ \ \ox\oplus\ox', \ \ \forall j = 1,\cdots, r,\\
  \\
  \oy_j\otimes\oy_j&\cong\ \ \oo\oplus\oz\oplus\oy_{\min\{2j,\ m-2j\}}, \ \ \forall j = 1,\cdots, r,\\
  \\
  \oy_j\otimes\oy_k&\cong\ \ \oy_{|j - k|}\oplus\oy_{\min\{j+k,\ m - j - k\}}, \forall 1\leq j\ ,k\leq r, \ \ j \neq k.
  \end{array}
\end{equation}
  %Feb 12, 2017
  
  In the following, we will let
  \begin{equation}\label{kzero}
  \kzero = \mathrm{span}_\mathbb{C} \{\obj{1}, \obj{Z}, \obj{Y}_1, ..., \obj{Y}_r, \obj{X}, \obj{X}'\},
  \end{equation}
  and we will view $\irr$ as the fixed basis of $\kzero$. 
  %Feb 12, 2017
  
  To any modular tensor category, we can construct a Reshetikhin-Turaev TQFT. The TQFT is, roughly speaking, a tensor functor from a suitably defined cobordism category to the category of finite-dimensional vector spaces. The cobordism category has closed oriented surfaces as objects and 3-manifolds bounding two such surfaces as morphisms. The tensor structure on the cobordism category is the disjoint union, and the tensor structure on the category of vector spaces is the tensor product over the ground field.
  
  In particular, for any orientation preserving diffeomorphism $\Psi$ of a surface $\Sigma_g$ of genus $g$, the image of the mapping cylinder of $\Psi$ under the TQFT functor is a linear automorphism of the vector space associated to $\Sigma_g$. It is unique up to scalar multiples and is invariant under isotopy. As a result, the TQFT gives rise to projective representations of the mapping class groups of surfaces of genus $g$ for each $g\in \mathbb{Z}_{>0}$.
  %Feb 12, 2017 
  
  As $\meta{p}$ is a modular tensor category, we can of course consider the TQFT associated to it. In the genus one case, we have $\mcg 1 = \modgp$ and we give the representation explicitly in terms of the generators of $\modgp$ as follows.
  %Feb 12, 2017
  
  Let 
  $\sgen =
  \begin{bmatrix}
  0 & -1\\
  1 & 0
  \end{bmatrix}
  $ and  
  $\tgen =
  \begin{bmatrix}
  1 & 1\\
  0 & 1
  \end{bmatrix}
  $
  be generators of $\modgp$. Let $A$ be the $(r\times r)$-matrix with entries given by
  
  \begin{equation}
  \label{eq:centralblock}
  A_{jk} = \displaystyle \frac{2}{\sqrt{p}}\cos(\frac{2\pi jk}{p}) = \displaystyle \frac{1}{\sqrt{p}}(\zeta^{jk}+\zeta^{-jk})
  \end{equation}
  for all $1 \leq j, k\leq r$.
  
  Let 
  \begin{equation}\label{rowvec}
  a =
  \begin{bmatrix}
  1\\
  \vdots\\
  1
  \end{bmatrix}
  \end{equation}
  be an $(r\times 1)$-dimensional vector, and let
  
  \begin{equation}
  \label{eq:twistx}
  \tx = \zeta_8^{r} = e^{\tpi\cdot\frac{r}{8}}.
  \end{equation}
  Note that $\tx\notin\roi$, but $\tx^2\in\roi$.
  
  From now on, we will suppress the subscript of $\zeta_p$ by simply writing $\zeta$,  while letting
  
  \begin{equation}
  \label{eq:twist}
  \tw j = \zeta^{rj^2} = e^{\frac{2\pi i}{p} rj^2} =e^{\frac{\tpi}{p}\cdot\frac{j(p  - j)}{2}}, \ \ \ \forall j\in \mathbb{Z}.
  \end{equation}
  The projective representation of $\modgp$ derived from the $\meta{p}$-TQFT is given by (see, for example, \cite{MR2832261}):
  
  \begin{equation}
  \label{eq:rep}
  \rep: \modgp \to \pgl{\kzero},
  \end{equation}
  
  \begin{equation}
  \label{eq:Sorig}
  \rep(\sgen) =\sorig,
  \end{equation}
  and
  
  \begin{equation}
  \label{eq:Torig}
  \rep(\tgen) =\torig.
  \end{equation}
  Here $b_{\mu\times \nu}$ in a matrix $M$ is understood as a block of $M$ whose dimension is $\mu\times \nu$ and whose entries are all $b$.
  
  Now we are ready to proceed to the proof of \hyperref[thm1]{Theorem \ref*{thm1}}.

\section{Proof of \texorpdfstring{\hyperref[thm1]{Theorem 1}}{}}\label{sec3}
  
  To prove the theorem, we will give an explicit change of basis matrix $W \in \gl{\kzero}$ so that $W^{-1}\rep (\sgen)W$ and $W^{-1}\rep (\tgen)W$ have entries in $\roi$. We will find $W$ in several steps. First, we decompose $ \kzero $ into a direct sum of two invariant subspaces in \hyperref[lm1]{Lemma \ref*{lm1}}, and reduce the problem to \hyperref[clm1]{Claim \ref*{clm1}}. We then investigate properties of the column vectors of the representation after the  change of basis proposed in \hyperref[clm1]{Claim \ref*{clm1}}. We will prove integrality of one of the column vector in \hyperref[prop2]{Proposition \ref*{prop2}}. Finally, we will prove the integrality of the rest of the column vectors by proving \hyperref[vectclm]{Claim \ref*{vectclm}}.
  %Feb 12, 2017
  
  Let
  \begin{equation}
  \label{eq:Umat}
  U =
  \left[
  \begin{array}{ccccc}
  1 & 0           & 0  & 1           &  \\
  &&&& 0_{2\times r}\\
  -1& 0           & 0  & 1           &  \\
  &&&&\\
  & 0_{r\times 4} &    & & Id_r \\
  &&&&\\
  0 & 1           & \tx  & 0           & \\
  &&&& 0_{2\times r}\\
  0 & 1           & -\tx & 0           & \\
  \end{array}
  \right].
  \end{equation}
  %Feb 12, 2017
  
  \begin{lemma}\label{lm1}
  	After the change of basis by $U$, $\kzero$
  	splits into a direct sum of two invariant subspaces $\kzero \cong H_1\oplus
  	H_2$, with $\rep|_{H_1}$ integral under the new basis.
  \end{lemma}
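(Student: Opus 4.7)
The plan is to read off the two candidate invariant subspaces directly from the column groupings of $U$ and then verify the splitting together with integrality of $\rep|_{H_1}$ by direct computation from the explicit matrices for $\rep(\sgen)$ and $\rep(\tgen)$ given above. Reading $U$ column by column, the new basis vectors are $u_1 = \oo - \oz$, $u_2 = \ox + \ox'$, $u_3 = \tx(\ox - \ox')$, $u_4 = \oo + \oz$, and $u_{4+k} = \oy_k$ for $k = 1, \ldots, r$. I propose $H_1 := \spanc\{u_1, u_2, u_3\}$ and $H_2 := \spanc\{u_4, u_5, \ldots, u_{r+4}\}$. These span $\kzero$, since $\{\oo \pm \oz\}$ spans the same plane as $\{\oo, \oz\}$ and $\{\ox + \ox',\ \tx(\ox - \ox')\}$ spans the same plane as $\{\ox, \ox'\}$.

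For $T$-invariance, I would read off from $\torig$ that $\rep(\tgen)$ fixes $\oo$ and $\oz$, scales each $\oy_j$ by $\tw j$, and acts on $\ox, \ox'$ by $\tx$ and $-\tx$. Therefore $u_1, u_4$ are $T$-fixed, each $u_{4+k}$ stays in $H_2$, and a short calculation gives $\rep(\tgen)(u_2) = u_3$ together with $\rep(\tgen)(u_3) = \tx^2 u_2$, so $H_1$ is $T$-stable. In the ordered basis $(u_1, u_2, u_3)$ the matrix of $T|_{H_1}$ is
\[
\begin{bmatrix} 1 & 0 & 0 \\ 0 & 0 & \tx^2 \\ 0 & 1 & 0 \end{bmatrix},
\]
which lies in $\roi$ because $\tx^2 = \zeta_8^{2r} = i^r \in \roi$.

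For $S$-invariance, the key observation is the $\pm\tfrac{1}{2}$ sign pattern in the corner blocks of $\sorig$, which expresses a $\mathbb{Z}/2$-symmetry exchanging $\oo \leftrightarrow \oz$ and $\ox \leftrightarrow \ox'$. Inspecting $\sorig$ directly yields $\rep(\sgen)(\oo - \oz) = \ox + \ox'$, $\rep(\sgen)(\ox + \ox') = \oo - \oz$, and $\rep(\sgen)(\ox - \ox') = \ox - \ox'$, together with the vanishing of the $\ox, \ox'$-coefficients of $\rep(\sgen)(\oo + \oz)$ and of each $\rep(\sgen)(\oy_j)$. Consequently both $H_1$ and $H_2$ are $S$-stable, and the matrix of $S|_{H_1}$ in the basis $(u_1, u_2, u_3)$ is the permutation-type matrix
\[
\begin{bmatrix} 0 & 1 & 0 \\ 1 & 0 & 0 \\ 0 & 0 & 1 \end{bmatrix},
\]
which is manifestly integral.

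The main obstacle is not in this lemma itself: once the change of basis $U$ has been guessed, the whole verification reduces to inspecting $\sorig$ and $\torig$ row by row. The real difficulty lies in the complementary block $\rep|_{H_2}$, whose $S$-matrix (the lower-right block of $\sblock$) still carries factors of $1/\sqrt p$ and factors of $2/\sqrt p$ on the first row and column that must be absorbed by a further, more delicate change of basis. That remains the content of the subsequent Claim 1, Proposition 2, and Claim 2.
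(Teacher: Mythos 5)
Your proof is correct and takes essentially the same route as the paper: read off the new basis from the columns of $U$, compute the actions of $\rep(\sgen)$ and $\rep(\tgen)$ on it, and observe the block decomposition with the integral $3\times 3$ blocks on $H_1$. One small point worth making explicit: to see that $H_2$ is $S$-stable you need not only the vanishing of the $\ox,\ox'$-coefficients of $\srep(\oo+\oz)$ and $\srep(\oy_j)$ but also that their $\oo$- and $\oz$-coefficients coincide (so no $\oo-\oz$ component appears); this is immediate from the matrix in (\ref{eq:Sorig}).
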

  
  \begin{proof}
  	From the form of $U$, it is easy to see that the new basis corresponding to $U$  is $\ubasis = \{\obj{1}-\obj{Z}, \obj{X} + \obj{X}', \tx(\obj X - \obj X'), \obj 1  + \obj Z, \obj Y_1, ..., \obj Y_r\}$. To determine the matrix coefficients of $\rep (\sgen)$ and $\rep(\tgen)$ after the change of basis, we simply have to calculate  how the two linear operations act on the new basis vectors. Then we will write the  resulting vectors as linear combinations of vectors in $\ubasis$.
  	%Feb 12, 2017
  	
  	By (\ref{eq:Sorig}), we have
  	\begin{equation}
  	\label{eq:sact1}
  	\begin{array}{rcl}
  	\rep(\sgen)(\obj 1)&=& \displaystyle\hfsq p\obj 1 + \hfsq p\obj Z + \fsq p\sump\obj Y_k + \frac{1}{2}\obj X + \frac{1}{2}\obj X',\\
  	\\
  	\rep(\sgen)(\obj Z)&=& \displaystyle\hfsq p\obj 1 + \hfsq p\obj Z + \fsq p\sump\obj Y_k - \frac{1}{2}\obj X - \frac{1}{2}\obj X',\\
  	\\
  	\rep(\sgen)(\obj Y_j)&=& \displaystyle\fsq p\obj 1 + \fsq p\obj Z + \sump A_{ kj}\obj Y_k, \ \forall j = 1, ..., r,\\
  	\\
  	\rep(\sgen)(\obj X)&=& \displaystyle\frac{1}{2}\obj 1 - \frac{1}{2}\obj Z + \frac{1}{2}\obj X - \frac{1}{2}\obj X',\\
  	\\
  	\rep(\sgen)(\obj X')&=& \displaystyle\frac{1}{2}\obj 1 - \frac{1}{2}\obj Z - \frac{1}{2}\obj X + \frac{1}{2}\obj X'.\\
  	\end{array}
  	\end{equation}
  	So the action of $\rep(\sgen)$ on the new basis vectors (written as linear  combinations of them) is given by
  	\begin{equation}
  	\label{eq:sact2}
  	\begin{array}{rcl}
  	\srep(\obj 1 - \obj Z) &=& \obj X + \obj X',\\
  	\\
  	\srep(\obj X + \obj X') &=& \obj 1 - \obj Z,\\
  	\\
  	\srep(\tx(\obj X -\obj X')) &=& \tx(\obj X - \obj X'),\\
  	\\
  	\srep(\obj 1 + \obj Z) &=& \fsq p(\obj 1 + \obj Z) + \tfsq p\sump\obj Y_k,\\
  	\\
  	\srep(\obj Y_j) &=& \fsq p(\obj 1 + \obj Z) + \sump A_{kj}\obj Y_k.
  	\end{array}
  	\end{equation}
  	Therefore, the linear map $\srep$ has the following matrix presentation in the new  basis $\ubasis$:
  	\begin{equation}
  	\label{eq:sucob}
  	U^{-1}\srep U =\sblock.
  	\end{equation}
  	
  	By a similar argument, we have
  	\begin{equation}
  	\label{eq:tact2}
  	\begin{array}{rcl}
  	\trep(\obj 1 - \obj Z) &=& \obj 1 - \obj Z,\\
  	\\
  	\trep(\obj X + \obj X')&=& \tx (\obj X - \obj X'),\\
  	\\
  	\trep(\tx(\obj X - \obj X'))&=& \tx^2 (\obj X + \obj X'),\\
  	\\
  	\trep(\obj 1 +\obj Z) &=& \obj 1 + \obj Z,\\
  	\\
  	\trep(\obj Y_j) &=& \tw j\obj Y_j.
  	\end{array}
  	\end{equation}
  	Therefore, under $\ubasis$, $\trep$ has matrix presentation
  	\begin{equation}
  	\label{eq:tucob}
  	U^{-1}\trep U = \tblock.
  	\end{equation}
  	The empty slots in the matrix are considered as 0-matrices of suitable size.
  	%Feb 12, 2017
  	
  	It is easy to see, either from the actions of $\srep$ and $\trep$ or from the  block form of their matrix presentations in the basis $\ubasis$, that they  preserve the subspaces $H_1 = \spanc\{\obj 1 - \obj Z, \obj X + \obj X', \tx(\obj  X - \obj{X}')\}$ and the subspace $H_2 = \spanc\{\obj{1} + \obj{Z}, \obj Y_ 1, ..., \obj Y_r\}$ of $\kzero$. So we have
  	
  	\begin{equation}\label{decompofkz}
  	\kzero \cong H_1\oplus H_2.
  	\end{equation}
  	In addition, the matrix coefficients of $U^{-1}\srep U$  and $U^{-1}\trep U$ restricted  to $H_1$ are in $\roi$.
  \end{proof}

  Given \hyperref[lm1]{Lemma \ref*{lm1}}, we just have to find a change of basis for  the $(r+1)$-dimensional subspace $H_2$ so that $\rep$
  restricted to $H_2$ is integral. For convenience, we introduce the following notations: 
  
  \begin{equation}
  \label{eq:sprime}
  S' = \rep|_{H_2}(\sgen) = \sprime
  \end{equation}
  and
  \begin{equation}
  \label{eq:tprime}
  T'= \rep|_{H_2}(\tgen) = \tprime.
  \end{equation}
  
  Instead of $S'$ and $T'$, we would prefer to work with their transposes. We define
  
  \begin{equation}
  \label{eq:thes}
  S := (S')^t = \cob{D}{S'} = \smat
  \end{equation}
  and 
  \begin{equation}
  \label{eq:thet}
  T:=(T')^t = \cob{D}{T'} = T' = \tprime.
  \end{equation}
  Here
  
  \begin{equation}
  \label{eq:dmat}
  D = \dmat.
  \end{equation}

  \hyperref[thm1]{Theorem \ref*{thm1}} now follows from the claim below:
  %Feb 12, 2017
  
  \begin{clm}\label{clm1}
  	Let $V$ be the following Vandermonde matrix
  	\begin{equation}
  	\label{eq:vandermonde}
  	V = \vdm.
  	\end{equation}
  	Then $\cob{V}{S}$ and $\cob{V}{T}$ are both integral.
  \end{clm}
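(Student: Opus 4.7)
The plan is to analyze $V^{-1}TV$ and $V^{-1}SV$ separately, reducing the latter to two sub-claims distinguished by whether the column index $k$ is zero or positive.

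For $V^{-1}TV$, I would observe that $T$ acts on the columns of $V$ by an almost-shift: writing $\tw 0 := 1$ so that the $k$-th column of $V$ is $v_k := [\tw{i}^k]_{i=0}^{r}$, one has $T v_k = v_{k+1}$ for $k \le r-1$, while $T v_r = [\tw{i}^{r+1}]_{i=0}^{r}$. The polynomial $q(x) := \prod_{i=0}^{r}(x - \tw{i})$ is monic of degree $r+1$, and its coefficients are elementary symmetric functions in the $p$-th roots of unity $\tw{i}$, hence lie in $\roi$. Since $q(\tw{i}) = 0$, the relation $\tw{i}^{r+1} = -\sum_{k=0}^{r} q_k \tw{i}^k$ (with $q(x) = x^{r+1} + \sum_k q_k x^k$) expresses $T v_r$ as an $\roi$-linear combination of $v_0, \dots, v_r$, so $V^{-1}TV$ is the companion matrix of $q$ and is integral.

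For $V^{-1}SV$, the first step is to compute each $Sv_k$ explicitly using quadratic Gauss sums. A standard completion-of-the-square argument, together with the congruence $(4r)^{-1} \equiv r \pmod{p}$ (which follows from $2r \equiv -1 \pmod{p}$), should yield
\begin{equation*}
S v_0 = \sqrt{p}\, e_0, \qquad S v_k = \jac{rk}{p}\,\iota \cdot [\tw{i}^{-k^{-1}}]_{i=0}^{r} \quad \text{for } 1 \le k \le r,
\end{equation*}
where $e_0$ is the first standard basis vector and $k^{-1}$ is taken modulo $p$. This reduces the claim to showing that both $\sqrt{p}\, e_0$ and each vector $[\tw{i}^{-k^{-1}}]_{i=0}^{r}$ have $\roi$-coordinates in the basis $\{v_0, \dots, v_r\}$; these two assertions become \hyperref[prop2]{Proposition \ref*{prop2}} and \hyperref[vectclm]{Claim \ref*{vectclm}} respectively.

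The columns with $k \ge 1$ are handled by a polynomial reduction: for $m := -k^{-1} \bmod p$, dividing $x^m$ by the monic polynomial $q(x) \in \roi[x]$ gives $x^m = q(x) a(x) + b(x)$ with $\deg b \le r$ and $b \in \roi[x]$, and evaluating at each $\tw{i}$ yields $[\tw{i}^m]_{i=0}^{r} = \sum_{j=0}^{r} b_j v_j$, so the $k$-th column of $V^{-1}SV$ is the integral vector $\jac{rk}{p}\,\iota\,(b_0, \dots, b_r)^t$. The $k = 0$ column is the main obstacle: the unique polynomial of degree $\le r$ realizing $S v_0$ under evaluation is
\begin{equation*}
f_0(x) = \frac{\sqrt{p}}{\prod_{i=1}^{r}(1 - \tw{i})}\, \prod_{i=1}^{r}(x - \tw{i}),
\end{equation*}
and since $\prod_i(x - \tw i)$ has coefficients in $\roi$, the crux is to show that the prefactor lies in $\roi$. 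The plan is an ideal-theoretic argument: each $1 - \tw{i} = 1 - \zeta^{r i^2}$ (with $ri^2 \not\equiv 0 \pmod{p}$) is an associate of the prime $1 - \zeta$ in $\mathbb{Z}[\zeta_p]$, so the denominator generates the ideal $(1 - \zeta)^r$; combined with the standard factorization $(p) = (1-\zeta)^{p-1}$, which gives $(\sqrt{p}) = (1-\zeta)^r$ after extending scalars to $\roi$, we conclude that $\sqrt{p}/\prod_{i=1}^{r}(1-\tw{i})$ generates the unit ideal and is therefore a unit in $\roi$, completing the argument.
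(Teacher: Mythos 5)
Your proposal is correct and follows the same overall strategy as the paper's proof: the Gauss-sum evaluation of the columns of $SV$ is Proposition~\ref{prop1}, the companion-matrix identification of $\cob{V}{T}$ is Lemma~\ref{keylm} and its corollary, and the Lagrange-interpolation reduction of the $k=0$ column to the assertion that $\sqrt{p}\big/\prod_{i=1}^{r}(1-\tw i)$ lies in $\roi$ is exactly Proposition~\ref{prop2}. The one genuine divergence is how that last fact is established. You argue ideal-theoretically: each $1-\tw i$ is an associate of $1-\zeta$, the product generates $\bigl((1-\zeta)\bigr)^{r}$, and $(\sqrt{p})=\bigl((1-\zeta)\bigr)^{r}$ follows from $(p)=\bigl((1-\zeta)\bigr)^{p-1}$ and unique factorization of ideals, so the quotient is a unit. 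The paper instead computes explicitly: evaluating $\Phi_p(x)=\prod_{l=1}^{2r}(x-\zeta^l)$ at $x=1$ gives $p=\epsilon\prod_{k=1}^{r}(1-\zeta^k)^2$ with $\epsilon$ an explicit unit (Lemma~\ref{numberthry}), and each ratio $(1-\zeta^k)/(1-\tw k)$ is shown to be a unit by the elementary observation that $\alpha$ and $\beta$ are multiples of each other mod $p$ (Corollary~\ref{cor1}). Both are standard; yours is shorter but imports Dedekind-domain machinery and needs the small check that the ideal computation survives the passage from $\mathbb{Z}[\zeta_p]$ to $\mathbb{Z}[\zeta_{4p}]$ when $p\equiv 3\pmod 4$, whereas the paper's is self-contained and exhibits the unit explicitly. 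Your polynomial-division treatment of the columns with $k\ge 1$ (reducing $x^m$ modulo $q$ in $\roi[x]$) is the same computation as the paper's use of the powers $\cob{V}{T^m}$ of the companion matrix, just phrased in $\roi[x]/(q)$ rather than in matrices.
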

  
  \begin{proof}[Proof of Theorem 1]
  	The change of basis matrix $(Id_3 \oplus DV)$ makes the block matrices
  	$\cob{U}{\rep(\sgen)}$ and $\cob{U}{\rep(\tgen)}$ integral. Hence $W = U(Id_3\oplus DV)$
  	is the desired change of basis matrix.
  \end{proof}
  %Feb 12, 2017
  
  To prove the claim, we need the following property of
  $SV$. (Convention: in the following discussions, we will index the
  matrix entries from 0, and recall that by definition $\tw{0} = 1$.)
  
  \begin{prop}\label{prop1}
  	The $(j,k)$-th matrix coefficient of $SV$ is given by
  	\begin{equation}
  	\label{eq:svprop}
  	(SV)_{jk} =
  	\begin{cases}
  	\sqrt{p}, & \mbox{ if } j = k = 0,\\
  	0, & \mbox{ if } 1\leq j \leq r, \mbox{ and } k = 0,\\
  	\displaystyle \Big(\frac{rk}{p}\Big)_J\cdot \iota(p)\cdot \tw{j}^{-\frac{1}{k} }, &
  	\mbox{ if } k\geq 1.
  	\end{cases}
  	\end{equation}
  	Here $\displaystyle\Big(\frac{\ast}{\ast}\Big)_J$ stands for the Jacobi symbol  and
  	\begin{equation}
  	\label{eq:delta}
  	\iota(p) = \iotafunc
  	\end{equation}
  \end{prop}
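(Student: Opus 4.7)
My plan is to compute $(SV)_{jk} = \sum_{l=0}^{r} S_{jl}\,\theta_l^{k}$ directly and split into the three cases of the statement. Using the explicit block form of $S$ in \eqref{eq:thes}, the first row contributes $\tfrac{1}{\sqrt p}$ in the $(0,l)$ slot for $l\ge 1$ (and $\tfrac{1}{\sqrt p}$ in the $(0,0)$ slot), while for $j\ge 1$ the entries are $S_{j0}=\tfrac{1}{\sqrt p}$ and $S_{jl}=\tfrac{1}{\sqrt p}(\zeta^{jl}+\zeta^{-jl})$. This turns $(SV)_{jk}$ into a character/Gauss-sum-like expression in $\zeta$, which I will evaluate with standard arithmetic over $\mathbb{Z}/p\mathbb{Z}$.

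For the two $k=0$ cases the computation is immediate. When $j=k=0$ one gets $\tfrac{1}{\sqrt p}(1+2r)=\sqrt p$ since $2r+1=p$. When $j\ge 1$ and $k=0$, the sum $\sum_{l=1}^{r}(\zeta^{jl}+\zeta^{-jl})$ equals $\sum_{l=1}^{p-1}\zeta^{jl}=-1$, which cancels the diagonal $\tfrac{1}{\sqrt p}$ to give $0$.

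The content is the case $k\ge 1$. Here I would first observe that the pairing $\zeta^{jl}+\zeta^{-jl}$ combined with the symmetry $l\mapsto -l$ of $l^{2}$ in $\theta_l^{k}=\zeta^{rkl^{2}}$ allows me to rewrite
\begin{equation*}
(SV)_{jk}=\frac{1}{\sqrt p}\sum_{l=0}^{p-1}\zeta^{rk l^{2}+jl}
\end{equation*}
uniformly for all $j\ge 0$. I then complete the square in $\mathbb{Z}/p\mathbb{Z}$: writing $rkl^{2}+jl\equiv rk\bigl(l+\tfrac{j}{2rk}\bigr)^{2}-\tfrac{j^{2}}{4rk}\pmod p$, and using the crucial identity $2r\equiv -1\pmod p$, the shift variable is $-j/k$ and the constant term becomes $\tfrac{j^{2}}{2k}\equiv -\tfrac{rj^{2}}{k}\pmod p$. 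After reindexing, the inner sum reduces to the classical quadratic Gauss sum $G(rk)=\sum_{l=0}^{p-1}\zeta^{rk l^{2}}$, whose evaluation $G(rk)=\bigl(\tfrac{rk}{p}\bigr)_{J}\,\iota(p)\,\sqrt p$ is exactly what produces the Jacobi-symbol and $\iota(p)$ factors, cancelling the $1/\sqrt p$ prefactor. The collected phase $\zeta^{-rj^{2}/k}$ is precisely $\theta_{j}^{-1/k}$, where $1/k$ is the inverse of $k$ modulo $p$.

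The main obstacle is the exponent bookkeeping in the completion-of-the-square step: getting the sign of the shift right, applying $2r\equiv -1\pmod p$ consistently, and matching the final phase to $\theta_{j}^{-1/k}$ rather than to some other power of $\zeta$. Once that identification is clean, the evaluation of $G(rk)$ via the quadratic Gauss sum (including the split in $\iota(p)$ between $p\equiv 1,3\pmod 4$) is standard and finishes the proof.
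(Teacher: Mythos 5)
Your proposal is correct and follows essentially the same route as the paper: direct evaluation of the three cases, with the $k\ge 1$ case reduced to a classical quadratic Gauss sum by completing the square modulo $p$ and using $2r\equiv -1\pmod p$ (equivalently $4r^2\equiv 1\pmod p$) to identify the leftover phase with $\theta_j^{-1/k}$. One prose slip to fix: the off-diagonal entries of the $0$-th row of $S$ are $\tfrac{2}{\sqrt p}$, not $\tfrac{1}{\sqrt p}$ --- your actual computations (the evaluation $(1+2r)/\sqrt p=\sqrt p$ and the uniform rewriting $\tfrac{1}{\sqrt p}\sum_{l=0}^{p-1}\zeta^{rkl^2+jl}$, which requires $S_{0l}=\tfrac{1}{\sqrt p}(\zeta^{0\cdot l}+\zeta^{-0\cdot l})=\tfrac{2}{\sqrt p}$) already use the correct value, so only the description needs correcting.
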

  
  \begin{proof}
  	\textbf{Case 1.} When $j = k = 0$, a direct computation shows that 
  	\begin{equation}
  	\label{eq:jk0}
  	(SV)_{00} = \fsq p + \tfsq p \times r = \fsq{p} \times p= \sqrt{p}.
  	\end{equation}
  	
  	\textbf{Case 2.} When $1\leq j \leq r$, and $k = 0$, we have,
  	\begin{equation}
  	\label{eq:jzero}
  	\begin{array}{ccl}
  	(SV)_{j0} &=& \fsq{p} + \sum_{l = 1}^r A_{jl}\times 1\\
  	\\
  	&=& \fsq{p} + \sum_{l = 1}^r \fsq{p} (\zeta^{jl} + \zeta^{-jl})\\
  	\\
  	&=& \fsq{p} \sum_{l = 0}^{2r} \zeta^{jl}\\
  	\\
  	&=& 0.
  	\end{array}
  	\end{equation}
  	The third equality results from the fact that $-l \equiv p - l\mod p$. The last equality stands on the fact that for any $1\leq j\leq r$, $\zeta^j$ is  an $p$-th root of unity, hence is a solution to the minimal polynomial $\Phi_p(x)  = 1 + x + \cdots + x^{2r}$ (recall that by assumption $p$ is an odd prime).
  	
  	\textbf{Case 3.} When $1\leq k\leq r$, we have
  	\begin{equation}
  	\begin{array}{ccl}
  	(SV)_{jk}
  	&=& \fsq{p} + \sotp\fsq{p} (\zeta^{jl} + \zeta^{-jl})\times \tw l ^k\\
  	\\
  	&=& \fsq{p} + \fsq{p}\sotp (\zp{jl} + \zp{-jl})\times\zp{rkl^2}\\
  	\\
  	&=& \fsq{p}\szttp\zp{jl+rkl^2}\\
  	\\
  	&=&\fsq{p}\szttp\zp{rk(l^2+\frac{j}{rk} l)}.
  	\end{array}
  	\end{equation}
  	Note that by assumption, $1\leq k\leq r$, hence $\frac{j}{rk}$ is well-defined in  the finite field $\ffield$. Letting $\gamma = \frac{j}{2rk}\in\ffield$, we can continue  our calculation as follows:
  	\begin{equation}
  	\begin{array}{ccl}
  	\fsq{p}\szttp\zp{rk(l^2 + 2\gamma l)}
  	&=& \fsq{p}\szttp\zp{rk(l+\gamma)^2 - rk\gamma^2}\\
  	\\
  	&=& \fsq{p}\zp{-rk\gamma^2}\szttp\zp{rk(l+\gamma)^2}.
  	\end{array}
  	\end{equation}
  	Hence by the quadratic Gauss sum formula, we have
  	\begin{equation}
  	\begin{array}{ccl}
  	(SV)_{jk}
  	&=& \fsq{p}\times\zp{-rk\gamma^2}\times\jac{rk}{p}\times\iota(p)\times\sqrt{p}\\
  	\\
  	&=& \jac{rk}{p}\times\iota(m)\times\zp{-rk\gamma^2}.
  	\end{array}
  	\end{equation}
  	Note that
  	\begin{equation}
  	4r^2 - 1 = (2r+1)(2r-1) = p(2r-1)\equiv 0\mod p.
  	\end{equation}
  	Therefore,
  	\begin{equation}
  	k\gamma^2 = \displaystyle\frac{j^2}{4r^2k} \equiv \displaystyle\frac{j^2}{k}\mod p,
  	\end{equation}
  	and consequently,
  	\begin{equation}
  	\label{eq:svlast}
  	(SV)_{jk} = \jac{rk}{p}\times\iota(p)\times\zp{(rj^2)\times(-\frac{1}{k})} = \jac{rk}{p}\times\iota(p)\times\tw j ^{-\frac{1}{k}},
  	\end{equation}
  	as desired.
  \end{proof}
  
  To proceed further, let's recall some basic facts in number theory.
  %Feb 12, 2017
  
  \begin{lemma}
  	\label{numberthry}
  	Let $\epsilon = (-1)^r\times \zeta^{-\frac{r(r+1)}{2}}\in\roi^\times$. Then
  	\begin{equation}
  	\label{eq:3}
  	p = 2r+1 = \epsilon\dprod{k}{r}(1 - \zeta^k)^2.
  	\end{equation}
  \end{lemma}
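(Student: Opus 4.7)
The plan is to start from the cyclotomic factorization of the $p$-th cyclotomic polynomial evaluated at $1$, namely
\begin{equation*}
p = \Phi_p(1) = \prod_{k=1}^{p-1}(1-\zeta^k),
\end{equation*}
which holds because $\Phi_p(x) = 1 + x + \cdots + x^{p-1} = \prod_{k=1}^{p-1}(x-\zeta^k)$. Since $p-1 = 2r$, the plan is to split this product into the two halves $k = 1,\ldots, r$ and $k = r+1,\ldots, 2r$ and match them up via the substitution $k \mapsto p - k$.

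For the second half, write $k = p - j$ for $j = 1, \ldots, r$, so that $\zeta^k = \zeta^{-j}$. The key elementary identity is
\begin{equation*}
1 - \zeta^{-j} = -\zeta^{-j}(1-\zeta^j),
\end{equation*}
which is immediate from factoring out $\zeta^{-j}$. Taking the product over $j = 1, \ldots, r$ gives
\begin{equation*}
\prod_{j=1}^{r}(1-\zeta^{-j}) = (-1)^{r}\,\zeta^{-(1+2+\cdots+r)} \prod_{j=1}^{r}(1-\zeta^j) = (-1)^{r}\,\zeta^{-r(r+1)/2}\prod_{j=1}^{r}(1-\zeta^j),
\end{equation*}
so the exponent of $\zeta$ collapses to $r(r+1)/2$ by the arithmetic series formula. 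Combining the two halves then yields
\begin{equation*}
p = \prod_{k=1}^{r}(1-\zeta^k)\cdot\prod_{j=1}^{r}(1-\zeta^{-j}) = \epsilon\,\prod_{k=1}^{r}(1-\zeta^k)^2,
\end{equation*}
which is the claimed identity.

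It remains to verify that $\epsilon \in \roi^{\times}$, but this is automatic: $(-1)^r = \pm 1$ and $\zeta^{-r(r+1)/2}$ is a root of unity lying in $\cycring{\zeta_p}\subseteq\roi$, and roots of unity are units of any ring of integers containing them. There is no real obstacle here; the only thing to keep track of carefully is the sign and the power of $\zeta$ arising from the $r$ applications of the identity $1-\zeta^{-j} = -\zeta^{-j}(1-\zeta^j)$, which I would double-check by testing the small case $p = 5$, $r = 2$ to make sure $\epsilon = \zeta^{-3}$ and the identity $5 = \zeta^{-3}(1-\zeta)^2(1-\zeta^2)^2$ indeed holds.
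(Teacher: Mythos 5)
Your proposal is correct and follows essentially the same route as the paper: evaluate $\Phi_p(x)=\prod_{l=1}^{2r}(x-\zeta^l)$ at $x=1$, pair each factor $1-\zeta^{-k}$ with $1-\zeta^{k}$ via the identity $1-\zeta^{-k}=-\zeta^{-k}(1-\zeta^{k})$, and collect the resulting sign and power of $\zeta$ into $\epsilon$. The bookkeeping of the exponent $r(r+1)/2$ and the sign $(-1)^r$ matches the paper exactly, and your $p=5$ sanity check is a nice touch the paper omits.
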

  
  \begin{proof}
  	Recall that 
  	\begin{equation}
  	\label{eq:4}
  	\Phi_p(x) = 1 + x + \cdots + x^{2r} = \dprod{l}{2r} (x - \zeta^l).
  	\end{equation}
  	Putting $x = 1$, we have 
  	\begin{equation}
  	\label{eq:5}
  	\begin{array}{ccl}
  	p\ \ =\ \ 2r+1
  	&=& \dprod{l}{2r} (1 - \zeta^l)\\
  	\\        
  	&=& \dprod{k}{r} (1 - \zeta^{k})\times(1 - \zeta^{-k})\\
  	\\
  	&=& \dprod{k}{r} (1 - \zeta^k)\times\zeta^{-k}\times(\zeta^k - 1)\\
  	\\
  	&=& \dprod{k}{r} (-\zeta^{-k})\times\dprod{k}{r}(1 - \zeta^k)^2\\
  	\\
  	&=& (-1)^r\times\zeta^{-\frac{r(r+1)}{2}}\times\dprod{k}{r}(1 - \zeta^k)^2\\
  	\\
  	&=& \epsilon\dprod{k}{r}(1 - \zeta^k)^2.
  	\end{array}
  	\end{equation}
  \end{proof}
  
  Note that $\epsilon^{\frac{1}{2}}$ is also in $\roi^\times$. Indeed, by definition,  $(-1)^{\frac{r}{2}}$ is a power of $\tx^2$ and $\frac{-r(r+1)}{4}$ is well-defined  in $\ffield$. Note also that for any integers $\alpha, \beta$ such that $g.c.d(\alpha, p) = g.c.d.(\beta, p) = 1$, we have
  \begin{equation}
  \label{eq:unit}
  \displaystyle\frac{1 - \zeta^\alpha}{1 - \zeta^{\beta}} \in \roi^\times.
  \end{equation}
  This is because in $\ffield$, we can write $\alpha$ as a multiple of $\beta$, so  the quotient in (\ref{eq:unit}) becomes a sum of elements in $\roi$, so it is in $\roi $. On the other hand, we can write $\beta$ as a multiple of $\alpha$, then the  inverse of the quotient in (\ref{eq:unit}) is also a sum of elements in $\roi$, hence  in $\roi$.
  
  Combining \hyperref[numberthry]{Lemma \ref*{numberthry}} and the above observation,  we have
  \begin{cor}\label{cor1}
  	\label{sqrtunit}
  	\begin{equation}
  	\label{eq:sqrtunit}
  	\sqrt{p} = \dprod{k}{r}(1 - \tw k)\times u,
  	\end{equation}
  	and $u\in \roi^\times$.
  \end{cor}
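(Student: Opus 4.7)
The plan is to derive the corollary as a direct consequence of Lemma \ref{numberthry} combined with the units-observation (\ref{eq:unit}) stated just before it.

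First I would take the square root of the identity
\[
p = \epsilon\prod_{k=1}^{r}(1-\zeta^k)^{2}
\]
from Lemma \ref{numberthry}. Since $\epsilon^{1/2}\in\roi^\times$ by the remark immediately following that lemma, this yields, up to a choice of sign,
\[
\sqrt{p} = \epsilon^{1/2}\prod_{k=1}^{r}(1-\zeta^k),
\]
which already expresses $\sqrt{p}$ as a unit multiple of a product of terms of the form $1-\zeta^{\alpha}$ with $\gcd(\alpha,p)=1$.

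Next I would pass from the exponents $\{k:1\le k\le r\}$ to the exponents $\{rk^{2}\bmod p : 1\le k\le r\}$ that appear in the definition $\tw k = \zeta^{rk^{2}}$. Since $p$ is prime and $1\le k\le r$, each $rk^{2}$ is nonzero modulo $p$, so by (\ref{eq:unit}) each ratio
\[
\frac{1-\tw k}{1-\zeta^{k}} = \frac{1-\zeta^{rk^{2}}}{1-\zeta^{k}}
\]
lies in $\roi^\times$. Taking the product over $k=1,\dots,r$ produces a single unit $v\in\roi^\times$ with
\[
\prod_{k=1}^{r}(1-\tw k) = v\cdot\prod_{k=1}^{r}(1-\zeta^{k}).
\]

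Substituting this into the expression for $\sqrt{p}$ gives
\[
\sqrt{p} = \epsilon^{1/2}v^{-1}\prod_{k=1}^{r}(1-\tw k),
\]
so $u:=\epsilon^{1/2}v^{-1}\in\roi^\times$ is the desired unit. No step is a genuine obstacle: the only minor point to verify cleanly is that the set-theoretic identification of exponents is unnecessary (we do not need $\{rk^{2}\}=\{k\}$ as sets modulo $p$), because (\ref{eq:unit}) allows us to compare the two products term-by-term regardless of whether the exponents agree.
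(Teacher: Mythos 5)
Your proposal is correct and follows essentially the same route as the paper: take the square root of the identity in Lemma \ref{numberthry}, absorb $\epsilon^{1/2}$ as a unit, and convert $\prod_k(1-\zeta^k)$ into $\prod_k(1-\tw k)$ term-by-term using the units from (\ref{eq:unit}). The paper's unit $u=\epsilon^{1/2}\prod_k\eta_k$ with $\eta_k=(1-\zeta^k)/(1-\tw k)$ is exactly your $\epsilon^{1/2}v^{-1}$.
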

  
  \begin{proof}
  	By \hyperref[numberthry]{Lemma \ref*{numberthry}}, we know that
  	\begin{equation}
  	p = \epsilon\dprod{k}{r}(1 - \zeta^k)^2.
  	\end{equation}
  	Then
  	\begin{equation}
  	\sqrt{p} = \epsilon^{\frac{1}{2}}\dprod{k}{r}(1 - \zeta^k),
  	\end{equation}
  	where $\epsilon^{\frac{1}{2}}\in\roi^\times$ as shown above. Also by the  observation above, we have, for any $1\leq k\leq 2r$,
  	\begin{equation}
  	\eta_k = \displaystyle\frac{1 - \zeta^k}{1 - \tw k}  = \displaystyle\frac{1 - \zeta^k}{1 - \zeta^{rk^2}}\in \roi^\times.
  	\end{equation}
  	Letting $u = \epsilon^{\frac{1}{2}}\times\dprod{k}{r}\eta_k$, we have
  	\begin{equation}
  	\sqrt{p} = \epsilon^{\frac{1}{2}}\dprod{k}{r}(1 - \zeta^k) = \dprod{k}{r}(1 - \tw k)\times(\epsilon^{\frac{1}{2}}\times\dprod{k}{r}\eta_k) = \dprod{k}{r}(1 - \tw k)\times u.
  	\end{equation}
  	Note that $u$, as product of elements in $\roi^\times$, is in $\roi^\times$.
  \end{proof}
  
  \begin{prop}\label{prop2}
  	The 0-th column of $\cob{V}{S}$ is a vector in $\roi^{r+1}$.
  \end{prop}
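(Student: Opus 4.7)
The plan is to translate the problem into a Lagrange interpolation computation and then reduce it using Corollary \ref*{cor1}. First I observe that the $0$-th column of $V$ is the all-ones vector $a = (1,1,\ldots,1)^{t}$ because $\tw k^{0}=1$ for all $k$. Therefore the $0$-th column of $V^{-1}SV$ is $V^{-1}S(Va)=V^{-1}(Sa)$, and by Proposition \ref*{prop1} the vector $Sa$ equals the $0$-th column of $SV$, which is $\sqrt{p}\,e_{0}$. Consequently the $0$-th column of $V^{-1}SV$ is $\sqrt{p}\,V^{-1}e_{0}$, so the task is to show $\sqrt{p}\,V^{-1}e_{0}\in\roi^{r+1}$.

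Next I would identify $V^{-1}e_{0}$ using Lagrange interpolation at the nodes $1,\tw 1,\ldots,\tw r$. The vector $V^{-1}e_{0}$ consists of the coefficients (in the basis $1,x,\ldots,x^{r}$) of the unique polynomial
\begin{equation*}
L_{0}(x)=\dprod{k}{r}\frac{x-\tw k}{1-\tw k}
\end{equation*}
that takes the value $1$ at $x=1$ and $0$ at $x=\tw 1,\ldots,\tw r$. Expanding the numerator gives the coefficient of $x^{j}$ as $(-1)^{r-j}e_{r-j}(\tw 1,\ldots,\tw r)$, where $e_{r-j}$ is the elementary symmetric polynomial. Hence the $j$-th entry of $\sqrt{p}\,V^{-1}e_{0}$ equals
\begin{equation*}
(-1)^{r-j}\cdot e_{r-j}(\tw 1,\ldots,\tw r)\cdot\frac{\sqrt{p}}{\dprod{k}{r}(1-\tw k)}.
\end{equation*}

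Finally I invoke Corollary \ref*{cor1}: the factor $\sqrt{p}/\prod_{k=1}^{r}(1-\tw k)$ is exactly the unit $u\in\roi^\times$. Since each $\tw k$ lies in $\roi$, every elementary symmetric polynomial in them also lies in $\roi$, and multiplying by $u$ keeps us in $\roi$. This shows every entry of the $0$-th column of $V^{-1}SV$ belongs to $\roi$. The only mildly delicate step is arranging the identification of $V^{-1}e_{0}$ with the Lagrange basis polynomial coefficients, but once that is in place the conclusion follows immediately; no further computation is needed and there is no serious obstacle.
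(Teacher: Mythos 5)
Your proof is correct and follows essentially the same route as the paper: both reduce the claim to showing $\sqrt{p}\,V^{-1}e_0\in\roi^{r+1}$, identify the entries of $V^{-1}e_0$ as the coefficients of the Lagrange interpolation polynomial $\prod_{k=1}^r\frac{x-\tw{k}}{1-\tw{k}}$, and then clear the denominator using Corollary \ref{cor1}. Your explicit expansion via elementary symmetric polynomials is just a more concrete phrasing of the paper's observation that these are coefficients of the integral polynomial $\prod_{k=1}^r(x-\tw{k})$.
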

  
  \begin{proof}
  	By \hyperref[prop1]{Proposition \ref*{prop1}}, for any $j$, we have
  	\begin{equation}
  	(\cob{V}{S})_{j0} = \sum_{l = 0}^r \vinv{jl}(SV)_{l0} =\vinv{j0}\times\sqrt{p}.
  	\end{equation}
  	To prove the proposition, we simply have to show that $\vinv{j0}\times\sqrt{p}\in\roi$.
  	
  	By definition, we have
  	\begin{equation}
  	\label{eq:invmat}
  	V\cdot\vinv{} = Id_{r+1}.
  	\end{equation}
  	In other words, for any $0\leq k\leq r$,
  	\begin{equation}
  	\label{eq:invmatentry}
  	\sum_{j = 0}^r \tw{k}^j\times\vinv{j0} = \delta_{k,0},
  	\end{equation}
  	where $\delta_{\ast,\ast}$ is the Kronecker delta function. Consider the  polynomial
  	\begin{equation}
  	\label{eq:invpoly}
  	P_0(x) = \sum_{j = 0}^r \vinv{j0}\times x^j.
  	\end{equation}
  	By (\ref{eq:invmatentry}), we  have
  	\begin{equation}
  	\label{eq:values}
  	P_0(\tw{0}) = 1,\ \ P_0(\tw{k}) = 0,\ \ k = 1,...,r.
  	\end{equation}
  	Therefore, by the Lagrangian interpolation formula, we have
  	\begin{equation}
  	\label{eq:interpolation}
  	\displaystyle P_0(x) = \sum_{j = 0}^r \vinv{j0}\times x^j = \prod_{n = 1}^r\frac {x - \tw{n}}{1 - \tw{n}}.
  	\end{equation}
  	
  	By comparing coefficients, we can write down explicit formulas for $\vinv{j0}$. But what is  more important here is that 
  	\begin{equation}
  	\vinv{j0}\times\prod_{n = 1}^r(1 - \tw{n}) \in\roi,
  	\end{equation}
  	since it is a  coefficient of the integral polynomial $\prod_{n = 1}^r(x - \tw{n})$. On the other hand, by \hyperref[sqrtunit]{Corollary \ref*{cor1}}, we have $\sqrt{p} = \prod_{n = 1}^r (1 - \tw{n})\times u$ for some unit $u\in \roi^{\times}$, hence
  	\begin{equation}
  	\vinv{j0}\times\sqrt{p} = \vinv{j0}\times\prod_{n = 1}^r (1 - \tw{n})\times u\in  \roi.
  	\end{equation}
  \end{proof}

  By \hyperref[prop2]{Proposition \ref*{prop2}}, we are left to show
  that the $l$-th column vector of $SV$ for $1\leq l\leq r+1$ and all the
  column vectors of $TV$ have the property that after multiplying them
  $V^{-1}$ from left we get vectors in $\roi^{r+1}$. 
  
  In light of \hyperref[prop1]{Proposition \ref*{prop1}}, we have the
  following observation:
  
  \begin{lemma}\label{vectlem}
  	The $l$-th column vector of $SV$ for $1\leq l\leq r+1$ and all the
  	column vectors of $TV$ are, up to a scalar multiplication by $\pm i$ or $\pm 1$, of  the form $\vect{j}$ for some $0\leq j\leq 2r$.
  \end{lemma}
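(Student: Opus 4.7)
The plan is to treat $TV$ and $SV$ separately, drawing on \hyperref[prop1]{Proposition \ref*{prop1}} for the latter. In each case the $k$-th column will be identified explicitly with a scalar multiple of some $\vect{j}$, and the scalar will be read off directly from the formulas.

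For $TV$: the matrix $T = \tprime$ is diagonal with entries $1, \tw 1, \ldots, \tw r$, while $V_{jk} = \tw j^k$ by definition of $V$. Left multiplication by $T$ scales row $j$ by $\tw j$, so $(TV)_{jk} = \tw j^{k+1}$. Thus the $k$-th column of $TV$ coincides on the nose with $\vect{k+1}$, and the scalar required is simply $+1$.

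For $SV$: By \hyperref[prop1]{Proposition \ref*{prop1}}, whenever $1 \leq k \leq r$ we have $(SV)_{jk} = \jac{rk}{p} \cdot \iota(p) \cdot \tw j^{-1/k}$. The key observation is that $\tw j = \zeta^{rj^2}$ is a $p$-th root of unity, so the exponent $-1/k$ only needs to be interpreted in $\mathbb{Z}/p\mathbb{Z}$. Since $\gcd(k, p) = 1$, the element $-k^{-1} \in \mathbb{Z}/p\mathbb{Z}$ has a unique representative $j_k \in \{0, 1, \ldots, 2r\}$, and then $\tw j^{-1/k} = \tw j^{j_k}$ for every $j$ (including $j = 0$, where both sides equal $1$). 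Hence the $k$-th column of $SV$ equals $\jac{rk}{p} \cdot \iota(p) \cdot \vect{j_k}$, and the scalar prefactor lies in $\{\pm 1, \pm i\}$ because $\jac{rk}{p} \in \{\pm 1\}$ and $\iota(p) \in \{1, i\}$.

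Nothing in this argument is delicate; the only subtlety is recognizing that the symbolic exponent $-1/k$ appearing in \hyperref[prop1]{Proposition \ref*{prop1}} can and should be lifted to an ordinary nonnegative integer exponent of $\tw j$, after which matching against $\vect{j_k}$ is automatic. Together with the $0$-th column of $SV$, which was already handled by \hyperref[prop2]{Proposition \ref*{prop2}}, this covers every column needed for the subsequent reduction.
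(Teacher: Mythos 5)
Your proposal is correct and follows the same route as the paper: the paper's proof is the one-line observation that the lemma is ``a direct result of Proposition \ref{prop1} and the definition of $T$,'' and you have simply written out those details (reading $(TV)_{jk}=\tw{j}^{k+1}$ from the diagonal form of $T$, and lifting the exponent $-1/k$ in Proposition \ref{prop1} to a representative $j_k\in\{0,\dots,2r\}$ so that the $k$-th column of $SV$ becomes $\jac{rk}{p}\,\iota(p)\cdot\vect{j_k}$). The details check out, including the observation that the scalar lies in $\{\pm 1,\pm i\}$.
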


  \begin{proof}
  	This is a direct result of \hyperref[prop1]{Proposition \ref*{prop1}} and the definition  of $T$.
  \end{proof}
  
  Hence we reduce our problem to the problem of showing
  
  \begin{clm}\label{vectclm}
  	The vectors in \hyperref[vectlem]{Lemma \ref*{vectlem}}, after
  	being multiplied by $V^{-1}$ from left, become vectors in
  	$\roi^{r+1}$.
  \end{clm}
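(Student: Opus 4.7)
The plan is to interpret each vector $V^{-1} \vect{j}$ as the coefficient vector of the unique polynomial of degree at most $r$ interpolating $x \mapsto x^j$ at the $r+1$ nodes $\tw{0} = 1, \tw 1, \ldots, \tw r$. These nodes are pairwise distinct: $\tw k = \zeta^{rk^2}$, and for $0 \leq k, k' \leq r$ the equation $k^2 \equiv (k')^2 \pmod p$ forces $k \equiv \pm k' \pmod p$, which in this range means $k = k'$. So the interpolation problem has a unique solution $P_j(x)$ of degree $\leq r$, whose coefficient vector is precisely $V^{-1} \vect{j}$, and showing $V^{-1} \vect{j} \in \roi^{r+1}$ reduces to showing $P_j(x) \in \roi[x]$.

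To produce $P_j$ with coefficients in $\roi$, I will use the monic polynomial $F(x) = \dprod{n}{r}(x - \tw{n})$ (with the convention $\tw 0 = 1$, so the index starts at $n=0$). Its coefficients are elementary symmetric functions in the $\tw{n}$, which are elements of $\roi$; hence $F(x) \in \roi[x]$. Because $F$ is monic, Euclidean division is valid inside $\roi[x]$, producing $Q(x), R(x) \in \roi[x]$ with $x^j = Q(x) F(x) + R(x)$ and $\deg R \leq r$. Evaluating at each $\tw k$ gives $R(\tw k) = \tw k^{\,j}$, so $R = P_j$ by uniqueness of the interpolant. This shows $V^{-1}\vect{j} \in \roi^{r+1}$ for every $0 \leq j \leq 2r$.

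To finish the claim, observe that the scalar factors $\pm 1, \pm i$ allowed by \hyperref[vectlem]{Lemma \ref*{vectlem}} are all units of $\roi$: in both cases $p \equiv 1\pmod 4$ and $p \equiv 3 \pmod 4$, we have $\iota(p) \in \roi^\times$ directly from the definition of $\roi$. Multiplying by such a scalar preserves membership in $\roi^{r+1}$. Combined with \hyperref[prop2]{Proposition \ref*{prop2}}, which handled the zeroth column of $SV$ separately, this shows that every column of $\cob{V}{S}$ and of $\cob{V}{T}$ lies in $\roi^{r+1}$, and hence both matrices are integral.

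The only real step is the observation that $F(x)$ is monic with $\roi$-coefficients, so Euclidean division can be performed without introducing denominators; after that the interpolation argument is automatic. I do not anticipate a genuine obstacle at this stage, since the substantive work (the closed form of $(SV)_{jk}$ in \hyperref[prop1]{Proposition \ref*{prop1}} and the factorization of $\sqrt{p}$ in \hyperref[sqrtunit]{Corollary \ref*{cor1}}) has already been carried out; this final piece is a clean polynomial division.
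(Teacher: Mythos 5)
Your proof is correct, and it takes a cleaner, more elementary route than the paper's. The paper proves the key Lemma \ref{keylm} about companion matrices: it shows that $V^t$ diagonalizes the companion matrix $C_h$ of $h(x)=\dprod{n}{r}(x-\tw{n})$ (indexed from $n=0$), deduces $\cob{V}{T}=(C_h)^t\in\roi^{(r+1)\times(r+1)}$ since the coefficients of $h$ are elementary symmetric functions of the $\tw{n}$, and then observes that $V^{-1}\vect{j}$ is the first column of $\cob{V}{T^j}=((C_h)^t)^j$, hence integral. Your argument replaces the companion matrix with Euclidean division of $x^j$ by the same monic polynomial $F=h\in\roi[x]$, identifying $V^{-1}\vect{j}$ with the coefficient vector of the remainder via uniqueness of Lagrange interpolation. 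These are two faces of the same fact (the remainder of $x^j$ modulo $h$ is exactly what iterating the transposed companion matrix computes), and both hinge on the single substantive point that $h$ has coefficients in $\roi$; but your version avoids the matrix diagonalization entirely and is arguably more transparent. Two small merits of your write-up worth keeping: you explicitly verify that the nodes $\tw{0},\dots,\tw{r}$ are pairwise distinct (which the paper needs for $V$ to be invertible but never checks), and you are careful that the scalar prefactor is $\pm\iota(p)$, which lies in $\roi^\times$ in each congruence class of $p$ --- note that $\pm i\notin\roi$ when $p\equiv 1\pmod 4$, but in that case the scalar that actually occurs is $\pm 1$, so your phrasing in terms of $\iota(p)$ is the right one.
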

  
  We will need the following lemma.
  
  \begin{lemma}\label{keylm}
  	Let $f(x) = (x - x_0)(x - x_1)(x - x_2)\cdots (x - x_r) = x^{r+1} + a_1x^r + \cdots + a_rx + a_{r + 1}$. Then the matrix
  	\begin{equation}
  	C = \begin{bmatrix}
  	0           & 1     & 0          & 0          &\cdots    & 0     \\
  	0           & 0     & 1          & 0          & \cdots   & 0     \\
  	0           & 0     & 0          & 1          & \cdots   & 0     \\
  	\vdots      &\vdots &\vdots      &\vdots      &          &\vdots \\
  	0           & 0     & 0          & 0          & \cdots   & 1     \\
  	-a_{r+1}      &-a_r   & -a_{r-1}    &-a_{r-2}     &\cdots    &-a_1
  	\end{bmatrix}
  	\end{equation}
  	has $[1, x_i, x_i^2,\cdots, x_i^r]^t$ as eigenvectors corresponding to
  	eigenvalues $x_i$ for any $0\leq i\leq r$.
  \end{lemma}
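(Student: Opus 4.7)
The plan is a direct verification of the eigenvalue equation $Cv_i = x_i v_i$ for $v_i = [1, x_i, x_i^2, \ldots, x_i^r]^t$. This is essentially the classical fact that the companion matrix of a monic polynomial has its roots as eigenvalues, with the ``geometric progression'' column as the corresponding eigenvector, so the whole argument amounts to a short computation with no real obstacle.

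First I would handle the top $r$ rows of $C$. For $0 \leq j \leq r-1$ (indexing rows from $0$), the $j$-th row of $C$ has a single nonzero entry equal to $1$ in column $j+1$, so
\[
(Cv_i)_j \;=\; x_i^{j+1},
\]
which matches the $j$-th entry of $x_iv_i$. These rows therefore produce no condition on $x_i$ and are dealt with trivially.

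The only substantive step is the bottom row. Pairing the last row of $C$ with $v_i$ gives
\[
(Cv_i)_r \;=\; -a_{r+1} - a_r x_i - a_{r-1} x_i^2 - \cdots - a_1 x_i^r,
\]
while the last entry of $x_iv_i$ is $x_i^{r+1}$. Equality of these two expressions is precisely the relation
\[
x_i^{r+1} + a_1 x_i^r + a_2 x_i^{r-1} + \cdots + a_r x_i + a_{r+1} \;=\; 0,
\]
which is exactly $f(x_i) = 0$. Since $x_0, x_1, \ldots, x_r$ are the roots of $f$ by hypothesis, this holds for each $i$, completing the verification. The proof is really a one-line check once one rewrites $f(x_i) = 0$ in the appropriate form; the usefulness of the lemma lies not in its difficulty but in how it will let us convert the problem of applying $V^{-1}$ to geometric-progression vectors (from Lemma~\ref{vectlem}) into an eigenvector problem for an integral matrix $C$ built from the symmetric functions of the $\tw{k}$.
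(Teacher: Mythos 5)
Your verification is correct and is exactly the paper's own argument: the top rows of $C$ shift the entries of the geometric-progression vector, and the bottom row reduces to the relation $f(x_i)=0$, giving $x_i^{r+1}$ as the last entry of $Cv_i$. Nothing further is needed.
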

  
  \begin{proof}
  	For any $0\leq i\leq r$, we have
  	\begin{equation}
  	\label{eq:1}
  	C \begin{bmatrix}
  	1\\
  	x_i\\
  	x_i^2\\
  	\vdots\\
  	x_i^r\\
  	\end{bmatrix} = 
  	\begin{bmatrix}
  	x_i\\
  	x_i^2\\
  	x_i^3\\
  	\vdots\\
  	-a_{r+1} - a_rx_i-\cdots - a_1x_i^r
  	\end{bmatrix}.
  	\end{equation}
  	
  	Since $f(x_i) = x_i^{r+1} + a_1x_i^r + \cdots + a_rx_i + a_{r + 1} =
  	0$, we have $x_i^{r+1} = -a_{r+1} - a_rx_i-\cdots - a_1x_i^r$. Hence
  	\begin{equation}
  	\label{eq:2}
  	C
  	\begin{bmatrix}
  	1\\
  	x_i\\
  	x_i^2\\
  	\vdots\\
  	x_i^r
  	\end{bmatrix}
  	=
  	\begin{bmatrix}
  	x_i\\
  	x_i^2\\
  	x_i^3\\
  	\vdots\\
  	x_i^{r+1}
  	\end{bmatrix}
  	= x_i
  	\begin{bmatrix}
  	1\\
  	x_i\\
  	x_i^2\\
  	\vdots\\
  	x_i^r
  	\end{bmatrix}.
  	\end{equation}
  \end{proof}
  
  \begin{cor}
  	$\cob{V}{T}$ has entries in $\roi$. Consequently, $\cob{V}{T^j}$ has
  	entries in $\roi$ for every $0\leq j\leq 2r$. In particular, their
  	first columns are vectors in $\roi^{r+1}$.
  \end{cor}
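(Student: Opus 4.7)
The plan is to apply \hyperref[keylm]{Lemma \ref*{keylm}} directly, choosing $f(x) = \prod_{i=0}^{r}(x - \tw i)$ so that the roots $x_i = \tw i$ coincide with the diagonal entries of $T$. Under this choice, the matrix assembled from the eigenvectors $[1, \tw i, \tw i^2, \ldots, \tw i^r]^t$ for $i = 0, 1, \ldots, r$ is exactly $V^t$, and the corresponding eigenvalue diagonal is exactly $T$. The lemma then yields $C V^t = V^t T$, which transposes to $\cob{V}{T} = V^{-1} T V = C^t$, where $C$ is the companion matrix of $f$.

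Next I would argue that every entry of $C^t$ lies in $\roi$. The entries of $C^t$ are $0$, $1$, and the coefficients $-a_1, \ldots, -a_{r+1}$ of $f$; these coefficients are, up to sign, the elementary symmetric polynomials in $\tw 0, \tw 1, \ldots, \tw r$. Since each $\tw i \in \roi$ and $\roi$ is closed under addition and multiplication, all $a_j$ lie in $\roi$, which establishes the first assertion of the corollary.

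The remaining assertions follow at once: $\cob{V}{T^j} = (\cob{V}{T})^j = (C^t)^j$ for every $j \geq 0$, and closure of $\roi$ under the ring operations keeps these matrix powers integral; the first-column statement is then a direct specialization. Combined with \hyperref[vectlem]{Lemma \ref*{vectlem}}, this gives exactly the vectors $V^{-1}\vect{j}$ for $0 \leq j \leq 2r$ needed to finish \hyperref[vectclm]{Claim \ref*{vectclm}}. I foresee no real obstacle — the entire argument reduces to identifying $V^t$ as the eigenvector matrix of the companion matrix $C$ and invoking the fact that $\roi$ is a ring.
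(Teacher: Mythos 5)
Your proposal is correct and follows essentially the same route as the paper: both invoke \hyperref[keylm]{Lemma \ref*{keylm}} with roots $\tw 0 = 1, \tw 1, \ldots, \tw r$ to identify $V^t$ as the eigenvector matrix of the companion matrix, transpose to get $\cob{V}{T} = C^t$, and conclude integrality because the coefficients of $\prod_{i=0}^{r}(x - \tw i)$ lie in the ring $\roi$. The passage to powers $T^j$ and to first columns is also handled exactly as in the paper.
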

  
  \begin{proof}
  	Let $x_k = \tw{k}$ in \hyperref[keylm]{Lemma \ref*{keylm}}. Then we have the  corresponding polynomial $h(x) = (x - 1)(x - \tw{1})\cdots(x - \tw{r}) = x^{r+1} +  b_1x^r+\cdots +b_{r+1}$ with its companion matrix
  	\begin{equation}
  	C_h =
  	\begin{bmatrix}
  	0           & 1     & 0          & 0          &\cdots    & 0     \\
  	0           & 0     & 1          & 0          & \cdots   & 0     \\
  	0           & 0     & 0          & 1          & \cdots   & 0     \\
  	\vdots      &\vdots &\vdots      &\vdots      &          &\vdots \\
  	0           & 0     & 0          & 0          & \cdots   & 1     \\
  	-b_{r+1}      &-b_r   & -b_{r-1}    &-b_{r-2}     &\cdots    &-b_1
  	\end{bmatrix}.
  	\end{equation}
  	By \hyperref[keylm]{Lemma \ref*{keylm}}, $V^t$ diagonalizes $C_h$:
  	\begin{equation}
  	\label{eq:diagonalize}
  	\cob{(V^t)}{C_h} = T.
  	\end{equation}
  	Taking the transpose of both sides, we have
  	\begin{equation}
  	\label{eq:transpose}
  	V(C_h)^t\vinv{} = T.
  	\end{equation}
  	Note that $T$ is diagonal, then
  	\begin{equation}
  	\label{eq:conclusion}
  	\cob{V}{T} = (C_h)^t.
  	\end{equation}
  	As the $\roi$ is a ring, $b_k\in\roi$ for all $k$. Hence all entries in $C_h$ are in $\roi$, so is its transpose therefore $\cob{V}{T}$. The rest of the  corollary follows immediately.
  \end{proof}
  
  Observing that the first columns of $T^jV$ correspond exactly to the
  vectors in \hyperref[vectclm]{Claim \ref*{vectclm}}, we can conclude that
  \hyperref[vectclm]{Claim \ref*{vectclm}} is true. As a result,
  \hyperref[clm1]{Claim \ref*{clm1}}, as well as \hyperref[thm1]{Theorem
  	1}, is true.

\section{Weil representation over finite fields}\label{sec4}
  In Section 3 of \cite{MR2183960}, the genus one representation of $\modgp$ associated to the $\mathrm{SO}(3)$-TQFT for a fixed odd prime $p\geq 5$ (in the sense of \cite{MR1362791}) was considered, where the authors identified the representation with the odd part of the Weil representation of $\fmodgp$. Here we will prove a result in some sense ``dual'' to that in \cite{MR2183960}. Namely, for the fixed prime $p$, a factor of $\rep$ factors through the even part of the Weil representation of $\fmodgp$.
  
  To clarify the above paragraph, let us briefly recall the definition of the Weil representation over finite fields. The basic idea is to realize elements in $\fmodgp$ as intertwining operators of the Heisenberg representation of the Heisenberg group, which will be defined below. There is a vast amount of research on the Weil representations, and we will only extract some essential ingredients of the representation of $\fmodgp$ here. The interested readers are referred to \cite{MR0460477}. 
  
  Fix an odd prime $p\geq 5$. We start by looking at a group called the Heisenberg group $\heis{p}$, defined by

  \begin{equation}\label{Heisgp}
  \heis{p} = \Bigg\{\helt{x}{y}{z},\  x,\  y,\  z\in \ffield\Bigg\}.
  \end{equation}
  Here the group multiplication is the matrix multiplication. Considering the embedding
  \begin{equation}\label{center}
  \ffield \to \heis{p}, \ \ z\mapsto\helt{0}{0}{z},
  \end{equation}
  we can view the group  $\heis{p}$ as a central extension of $\ffield$. More precisely, we have a short exact sequence 
  \begin{equation}\label{ses}
  0 \to \ffield \to \heis{p} \to (\ffield)^2 \to 0.
  \end{equation}
  The quotient map is given by
  \begin{equation}\label{quot} 
  \heis{p}\to (\ffield)^2 , \ \ \helt{x}{y}{z}\mapsto \tvc{x}{y}.
  \end{equation}

  With a suitable choice of section to the quotient map above, it is not difficult to show that the defining action of $\fmodgp$ on $(\ffield)^2$, 
  
  \begin{equation}
  \slelt{a}{b}{c}{d}\tvc{x}{y} = \tvc{ax+by}{cx+dy},   \ \mathrm{where}\ \slelt{a}{b}{c}{d}\in\fmodgp,\ \tvc{x}{y}\in (\ffield)^2,
  \end{equation}
  can be lifted to $\heis{p}$, and the lifted action is trivial on the center $Z(\heis{p})$ of $\heis{p}$.
  
  Let $\funcsp{\ffield}$ denote the space of complex-valued functions on $\ffield$. It is easily seen that $\dim(\funcsp{\ffield}) = p$. Given any irreducible central character $\cchar : \ffield \to \mathbb{C}$, we can define a representation $\hrep_\cchar : \heis{p} \to \gl{\funcsp{\ffield}}$ by
  
  \begin{equation}\label{heisrep}
  \left(\hrep_\cchar \left(\helt{x}{y}{z}\right)(f)\right)(a) = \cchar (-xa + z)f(a - y),
  \end{equation}
  for any $\helt{x}{y}{z}\in\heis{p}$ and $f\in\funcsp{\ffield}$. 
  
  Since $\hrep_\cchar$ is $ p-$dimensional, by the representation theory of finite groups, it is either a direct sum of $p$ 1-dimensional representations or irreducible. However, in the first case, $\hrep_\cchar|_{Z(\heis{p})}$ should be trivial, which contradicts to our assumption on $\cchar$.
  
  By Theorem 3.1 of \cite{MR2722604}, if two irreducible representations of $\heis{p}$ coincide on the center $Z(\heis{p})$, then they are equivalent. Now let $\cchar$ be any nontrivial irreducible central character. For any $\alpha \in \fmodgp$, consider the representation $\hrep_\cchar\circ\alpha$, a $p$-dimensional representation of $\heis{p}$ with the property 
  
  \begin{equation}
  (\hrep_\cchar\circ\alpha)|_{Z(\heis{p})} = \cchar = \hrep_\cchar|_{Z(\heis{p})}.
  \end{equation} 
  By a similar argument as above, we know that $\hrep_\cchar\circ\alpha$ is also irreducible. Hence $\hrep_\cchar\circ\alpha$ is equivalent to $\hrep_\cchar$, i.e., there is an intertwining operator (unique up to scalar by Schur's lemma), denoted by $W_\cchar(\alpha) \in\gl{\funcsp{\ffield}}$ such that the diagram
  
  \begin{equation}\label{inttw}
  \xymatrix{
  	\funcsp{\ffield} \ar[rr]^{\hrep_\cchar(h)} \ar[d]_{W_\cchar(\alpha)} && \funcsp{\ffield} \ar[d]^{W_\cchar(\alpha)}\\
  	\funcsp{\ffield} \ar[rr]^{\hrep_\cchar(\alpha(h))} && \funcsp{\ffield}
  }
  \end{equation}
  commutes for all $h \in \heis{p}$.
  
  If we consider the class of $W_\cchar(\alpha)$ in the projective general linear group $\pgl{\funcsp{\ffield}}$ instead of $W_\cchar(\alpha)\in \gl{\funcsp{\ffield}}$, we can eliminate the scaling ambiguity and get a well-defined projective representation of $\fmodgp$ (by abuse of notation this map is also denoted by $\wrep{\cchar}$):
  
  \begin{equation}\label{weil}
  \wrep\cchar: \fmodgp \to \pgl{\funcsp{\ffield}}.
  \end{equation}
  We call this projective representation the Weil representation of $\fmodgp$ (with respect to $\cchar$).
  
  \textit{Remark.} We may omit the word ``projective'' when it does not cause confusions, and we will present an element in $\pgl{\funcsp{\ffield}}$ by one of its representatives in $\gl{\funcsp{\ffield}}$.

  Again,  let 
  $\sgen =
  \begin{bmatrix}
  0 & -1\\
  1 & 0
  \end{bmatrix}
  $ and 
  $\tgen =
  \begin{bmatrix}
  1 & 1\\
  0 & 1
  \end{bmatrix}
  $ be the generators of $\modgp$. Their reductions mod $p$ generate $\fmodgp$. By an abuse of notation, we will not distinguish $\sgen$ and $\tgen$ from their reductions. For $j \in \ffield$, let $f_j: \ffield\to\ffield$ be the $j-$th Kronecker delta function defined by
  
  \begin{equation}\label{deltafunc}
  f_j(x) = \delta_{j,x}, \ \ \forall x\in \ffield.
  \end{equation}
  The set $\{f_j| j \in \ffield\}$ is a basis of $\funcsp{\ffield}$, which we fix in the following.
  
  Now, to describe the Weil representation with respect to a nontrivial character $\cchar$, it suffices to give the matrices of $\wrep{\cchar}(\sgen)$ and $\wrep{\cchar}(\tgen)$ under the fixed basis defined above. It is not difficult to compute that
  
  \begin{equation}\label{weils}
  \wrep{\cchar}(\sgen) = 
  \begin{bmatrix}
  1 & 1           & 1              & \cdots & 1\\
  1 & \cchar(1)   & \cchar(2)      & \cdots & \cchar(p-1)\\
  1 & \cchar(2)   & \cchar(4)      & \cdots & \cchar(2(p-1))\\
  \vdots&\vdots   & \vdots         & \cdots & \vdots\\
  1 & \cchar(p-1) & \cchar(2(p-1)) & \cdots & \cchar((p-1)^2)\\
  \end{bmatrix}
  \end{equation}
  and that
  \begin{equation}\label{weilt}
  \wrep{\cchar}(\tgen) = 
  \begin{bmatrix}
  1 &&&&&\\
  & \cchar(-\frac{1^2}{2}) &&&&\\
  && \cchar(-\frac{2^2}{2}) &&&\\
  &&& \cchar(-\frac{3^2}{2}) &&\\
  &&&& \ddots &\\
  &&&&& \cchar(-\frac{(p-1)^2}{2})\\
  \end{bmatrix}.
  \end{equation}
  As before, $\frac{1}{2}$ is understood as the multiplicative reciprocal of $2$ in $\ffield$.

  Note that this representation is reducible. Indeed, it is easy to see that the $\mathbb{C}$-span of $\{f_k+f_{p-k}| k = 0,1,\cdots,r\}$ and $\{f_k - f_{p - k} | k = 0, 1, \cdots, r\}$ are two invariant subspaces. If we denote the former vector space by $\evensp$ and the latter by $\oddsp$, then we have a decomposition of representation spaces $\funcsp{\ffield} \cong \evensp \oplus\oddsp$.
  
  We are mainly interested in the restriction of the Weil representation on the even subspace $\evensp$. By (\hyperref[weils]{\ref{weils}}) and (\hyperref[weilt]{\ref{weilt}}), we have
  
  \begin{equation}\label{wevens}
  \wrep{\cchar}^{even}(\sgen) = 
  \wrep{\cchar}|_{\evensp}(\sgen) = 
  \begin{bmatrix}
  1             & a^t\\
  2\cdot a & B  
  \end{bmatrix}.
  \end{equation}
  Here $B$ is an $r\times r$-matrix with entries given by
  
  \begin{equation}\label{bmat}
  B_{jk} = \cchar(jk) + \cchar(-jk), \ \ \forall j, k = 1, \cdots , r.
  \end{equation}
  In addition, we have 
  
  \begin{equation}\label{wevent}
  \wrep{\cchar}^{even}(\tgen) = 
  \wrep{\cchar}|_{\evensp}(\tgen) = 
  \begin{bmatrix}
  1 &&&&&\\
  & \cchar(-\frac{1^2}{2}) &&&\\
  && \cchar(-\frac{2^2}{2}) &&\\
  &&& \ddots &\\
  &&&& \cchar(-\frac{r^2}{2})\\
  \end{bmatrix}. 
  \end{equation}

  If we choose the special character $\cchar: \ffield \to \mathbb{C}$ defined by
  
  \begin{equation}\label{specialchar}
  \cchar(j) = \zeta^j, 
  \end{equation}
  we will have $\sqrt{p} A = B$ and 
  
  \begin{equation}\label{charandtheta}
  2r \equiv -1\mod{p} \ \Rightarrow\  r\equiv -\frac{1}{2}\mod{p}\ \Rightarrow\  \cchar(-\frac{j^2}{2}) = \zeta^{rj^2} = \theta_j.
  \end{equation}
  Therefore,
  
  \begin{equation}\label{wsrou}
  \wrep{\cchar}^{even}(\sgen) = 
  \displaystyle\sqrt{p}
  \begin{bmatrix}
  \displaystyle\frac{1}{\sqrt{p}}& \displaystyle\frac{1}{\sqrt{p}}\cdot a^t\\
  \displaystyle\frac{2}{\sqrt{p}}\cdot a & A  
  \end{bmatrix},
  \end{equation}
  and
  \begin{equation}\label{wtrou}
  \wrep{\cchar}^{even}(\tgen) = 
  \begin{bmatrix}
  1 &&&\\
  & \theta_1 &&\\
  && \ddots &\\
  &&& \theta_r
  \end{bmatrix}. 
  \end{equation}

  Recall from previous sections that $\kzero \cong H_1\oplus H_2$ and that $H_2$ is an $(r+1)$-dimensional vector space. We can then identify $H_2$ and $\evensp$ via
  
  \begin{equation}\label{identification}
  \obj{1} + \obj{Z} \leftrightarrow 2f_0,\ \ \obj{Y}_j \leftrightarrow f_j+f_{-j},\ \forall j = 1, ..., r.
  \end{equation}

  With all the ingredients ready, we now state the theorem of this section:
  
  \begin{theorem}\label{eqweil}
  	Let $\cchar$ be chosen as in Equation (\hyperref[specialchar]{\ref{specialchar}}), then the restriction of the $\meta{p}$-TQFT representation of $\modgp$ to $H_2$, $\rep|_{H_2}$, factors through $\wrep{\cchar}^{even}$, the even part of the Weil representation of $\fmodgp$ associated to $\cchar$. In other words, we have the following commutative diagram:
  	\begin{equation}\label{tqftweil}
  	\xymatrix{
  		\modgp \ar[rr]^{\rep|_{H_2}} \ar[d]_{\mod p} &&
  		\pgl{H_2} \ar[d]^{\cong}\\
  		\fmodgp \ar[rr]^{\wrep{\cchar}^{even}} &&
  		\pgl\evensp
  	}.
  	\end{equation}
  \end{theorem}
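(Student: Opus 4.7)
The strategy is to verify directly that, under the identification \eqref{identification} of $H_2$ with $\evensp$, the matrices of $\rep|_{H_2}$ and $\wrep{\cchar}^{even}$ coincide projectively on the generators $\sgen,\tgen$. Once this is in hand, commutativity of the diagram \eqref{tqftweil} propagates automatically to all of $\modgp$: both $\rep|_{H_2}$ and the composition $\wrep{\cchar}^{even}\circ(\mod p)$ are projective group homomorphisms out of $\modgp$, and agreement on a generating set forces agreement everywhere.

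To implement this, I would first compute $\wrep{\cchar}^{even}(\sgen)$ and $\wrep{\cchar}^{even}(\tgen)$ in the ordered basis $\{2f_0,\,f_1+f_{-1},\ldots,f_r+f_{-r}\}$ of $\evensp$, which is the basis matched by \eqref{identification} to $\{\obj 1+\obj Z,\obj Y_1,\ldots,\obj Y_r\}$. Starting from \eqref{weils} and \eqref{weilt} and restricting along $\evensp\hookrightarrow\funcsp{\ffield}$, the identities $\wrep{\cchar}(\sgen)(f_0)=\sum_i f_i$ and $\wrep{\cchar}(\sgen)(f_j+f_{-j})=2f_0+\sum_{k\geq 1} B_{kj}(f_k+f_{-k})$, combined with the rescaling of the first basis vector from $f_0$ to $2f_0$ (i.e.\ conjugation by $\mathrm{diag}(2,1,\ldots,1)$), yield the block form \eqref{wevens}. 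The matrix of $\tgen$ is immediate since $\tgen$ acts diagonally on each $f_j$, giving \eqref{wevent}.

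Next, I would specialize to $\cchar(j)=\zeta^j$ and use two elementary arithmetic facts. First, $B_{jk}=\zeta^{jk}+\zeta^{-jk}=\sqrt p\,A_{jk}$, so $B=\sqrt p\,A$. Second, since $2r=p-1\equiv -1\pmod p$, one has $-\tfrac12\equiv r\pmod p$, hence $\cchar(-j^2/2)=\zeta^{rj^2}=\tw j$. Substituting these into \eqref{wevens} and \eqref{wevent} gives
\[
\wrep{\cchar}^{even}(\sgen)\;=\;\sqrt p\cdot S',\qquad \wrep{\cchar}^{even}(\tgen)\;=\;T',
\]
with $S',T'$ as in \eqref{eq:sprime} and \eqref{eq:tprime}.

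Passing to $\pgl{\evensp}\cong\pgl{H_2}$ kills the scalar $\sqrt p$, so on the generators the two routes through \eqref{tqftweil} coincide, and the diagram commutes. The only subtle point is the basis-scaling bookkeeping around the vector $2f_0$: this is exactly what reconciles the apparent asymmetry between the $\fsq p$ and $\tfsq p$ factors in the first row and column of $S'$ with the symmetric appearance of the block $B=\sqrt p\,A$ in the Weil matrix. Aside from that, the proof is a direct inspection with no genuine obstacle.
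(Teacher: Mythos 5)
Your proposal is correct and follows essentially the same route as the paper: the paper's proof likewise reduces to comparing the generator matrices in Equations (\ref{eq:sprime}), (\ref{eq:tprime}), (\ref{wsrou}) and (\ref{wtrou}) under the identification (\ref{identification}) and observing they differ only by the scalar $\sqrt{p}$, which vanishes in $\pgl{\evensp}$. Your explicit bookkeeping of the $2f_0$ rescaling and the remark that agreement on generators suffices are details the paper leaves implicit, but the argument is the same.
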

  
  \begin{proof}
  	By Equations (\hyperref[eq:sprime]{\ref{eq:sprime}}), (\hyperref[eq:tprime]{\ref{eq:tprime}}), (\hyperref[wsrou]{\ref{wsrou}}) and (\hyperref[wtrou]{\ref{wtrou}}), we know that $\rep|_{H_2}$ and $\wrep{\cchar}^{even}$ are only different by a scalar multiple, hence as projective representations, they are the same.
  \end{proof}
  
  We immediately have the following corollaries:
  
  \begin{cor}
  	The image of $\rep|_{H_2}$ is finite.
  \end{cor}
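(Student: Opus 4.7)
The plan is to deduce the corollary directly from \hyperref[eqweil]{Theorem 2}, which provides the factorization
\[
\rep|_{H_2} \;=\; \wrep{\cchar}^{even}\circ \pi,
\]
where $\pi:\modgp \to \fmodgp$ is reduction mod $p$. The key observation is that everything on the right-hand side of this factorization lives in, or maps out of, a finite object: the target $\fmodgp$ of $\pi$ is a finite group (its order is $p(p^2-1)$, being the special linear group over the finite field $\ffield$).

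First I would invoke \hyperref[eqweil]{Theorem 2} to identify, via the isomorphism $H_2\cong \evensp$ of (\ref{identification}), the image of $\rep|_{H_2}$ in $\pgl{H_2}$ with the image of $\wrep{\cchar}^{even}\circ \pi$ in $\pgl{\evensp}$. Since $\pi$ is surjective onto $\fmodgp$ (the reductions of $\sgen$ and $\tgen$ generate $\fmodgp$, as remarked just before (\ref{deltafunc})), the image of the composition equals the image of $\wrep{\cchar}^{even}:\fmodgp\to\pgl{\evensp}$.

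Finally, because $\wrep{\cchar}^{even}$ is a homomorphism defined on the finite group $\fmodgp$, its image is a subgroup of $\pgl{\evensp}$ of cardinality at most $|\fmodgp| = p(p^2-1)$, hence finite. Transporting back through the isomorphism $H_2\cong\evensp$, the image of $\rep|_{H_2}$ in $\pgl{H_2}$ is finite, which is the claim.

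There is no real obstacle here: the work has already been done in \hyperref[eqweil]{Theorem 2}, and the corollary is a formal consequence of the factorization through a finite group. The only minor thing to be careful about is that the statement is about a \emph{projective} representation, so the finiteness is in $\pgl{H_2}$ rather than in $\gl{H_2}$; this is exactly the setting of the diagram (\ref{tqftweil}), so no additional argument is needed.
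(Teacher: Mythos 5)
Your proof is correct and is exactly the argument the paper intends: the corollary is stated as an immediate consequence of \hyperref[eqweil]{Theorem 2}, since $\rep|_{H_2}$ factors through the finite group $\fmodgp$ and hence has image of cardinality at most $|\fmodgp|$. No further comment is needed.
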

  
  From equations (\hyperref[eq:tucob]{\ref{eq:tucob}}) and (\hyperref[eq:sucob]{\ref{eq:sucob}}), it is easy to see that the image of $\rep|_{H_1}$ can be viewed as a subgroup of the permutation group of the finite set $\{\pm 1, \pm i\}\times\{\obj{1} - \obj{Z}, \obj{X} \pm \obj{X'}\}$, so $\rep|_{H_1}(\modgp)$ is also finite. Hence, together with the above corollary, we have:
  
  \begin{cor}
  	The image of $\rep$ is finite.
  \end{cor}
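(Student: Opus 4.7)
The plan is to decompose the analysis along the $\rep$-invariant splitting $\kzero \cong H_1 \oplus H_2$ from Lemma~\ref{lm1} and combine finiteness on each summand. The preceding corollary already shows $\rep|_{H_2}(\modgp)$ is finite, so the entire task reduces to verifying that $\rep|_{H_1}(\modgp)$ is finite; the image of $\rep$ in $\pgl{\kzero}$, being block-diagonal with respect to this splitting, then lies in the image of a finite group.

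To control $\rep|_{H_1}$, I would read off the upper-left $3 \times 3$ blocks of equations (\ref{eq:sucob}) and (\ref{eq:tucob}). Under the basis $\{\oo - \oz,\ \ox + \ox',\ \tx(\ox - \ox')\}$ of $H_1$, the matrix of $\srep|_{H_1}$ is the transposition swapping the first two basis vectors and fixing the third, while the matrix of $\trep|_{H_1}$ fixes the first basis vector and cyclically sends $\ox + \ox' \mapsto \tx(\ox - \ox') \mapsto \tx^2(\ox + \ox')$. Since $\tx^2 = \zeta_8^{2r} \in \{\pm i\}$, both $\srep|_{H_1}$ and $\trep|_{H_1}$ are monomial matrices whose nonzero entries lie in the finite set $\{\pm 1, \pm i\}$.

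Consequently, $\rep|_{H_1}(\modgp)$ is contained in the finite group of $3 \times 3$ monomial matrices with entries in $\{\pm 1, \pm i\}$, which is a subgroup of the wreath product $\langle i \rangle \wr S_3$. Its image in $\pgl{H_1}$ is therefore finite. Combined with finiteness of $\rep|_{H_2}(\modgp)$, the image $\rep(\modgp)$ sits inside the projectivization of the block-diagonal finite subgroup $\rep|_{H_1}(\modgp) \times \rep|_{H_2}(\modgp) \subset \gl{H_1} \times \gl{H_2}$, which remains finite after projectivization. I do not foresee any substantive obstacle beyond correctly tracking the projective-versus-linear distinction across the two blocks, which is handled routinely by the existence of explicit finite-order lifts on each block.
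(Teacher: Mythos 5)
Your argument is correct and is essentially the paper's: the paper likewise reads off from (\ref{eq:sucob}) and (\ref{eq:tucob}) that $\rep|_{H_1}$ acts by permutations of the finite set $\{\pm 1, \pm i\}\times\{\oo-\oz,\ \ox\pm\ox'\}$ (i.e.\ by monomial matrices with fourth-root-of-unity entries) and combines this with the finiteness of $\rep|_{H_2}(\modgp)$ from the preceding corollary. One immaterial slip: $\tx^2=i^{r}$ lies in $\{\pm 1\}$ when $p\equiv 1 \bmod 4$ rather than in $\{\pm i\}$, but your stated conclusion that all nonzero entries lie in $\{\pm 1,\pm i\}$ is still correct.
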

  
  \textit{Remark.} The above corollary is a special case of the famous finiteness result obtained by \cite{MR2725181}.

\bibliographystyle{alpha}
\bibliography{/Users/Gohan/Dropbox/work/Papers/Reference.bib}

\end{document}